\newtheorem{theorem}{Theorem}[section]
\newtheorem{corollary}[theorem]{Corollary}
\newtheorem{remark}[theorem]{Remark}
\newtheorem{proposition}[theorem]{Proposition}
\newtheorem{definition}[theorem]{Definition}
\newtheorem{fact}[theorem]{Fact}
\newtheorem{problem}[theorem]{Problem}
\numberwithin{equation}{section}
\newcommand{\CC}{C_k}
\newcommand{\NN}{\mathbb{N}}
\newcommand{\GG}{\mathfrak{G}}
\newcommand{\UU}{\mathcal{U}}
\newcommand{\w}{\omega}
\newcommand{\e}{\varepsilon}
\newcommand{\KK}{\mathcal{K}}
\newcommand{\AAA}{\mathcal A}
\renewcommand{\phi}{\varphi}
\title[Free locally convex spaces with a small base] {Free locally convex spaces  with a small base}
\author[S. Gabriyelyan]{Saak Gabriyelyan}
\address{Department of Mathematics, Ben-Gurion University of the Negev, Beer-Sheva, P.O. 653, Israel}
\email{saak@math.bgu.ac.il}
\author[J. K{\c{a}}kol]{Jerzy K{\c{a}}kol}
\address{A. Mickiewicz University $61-614$ Pozna{\'n}, Poland and Institute of Mathematics, Czech Academy of Sciences, Czech Republic}
\email{kakol@amu.edu.pl}
\subjclass[2000]{Primary 46A03; Secondary 54A25, 54D50}
\keywords{free locally convex space, $\GG$-base, $\CC(X)$, compact resolution}
\thanks{The research was supported for the second named author  by Generalitat Valenciana, Conselleria d'Educaci\' o i Esport, Spain, Grant PROMETEO/2015/058   and by the GA\v{C}R project 16-34860L and RVO: 67985840. The second author  gratefully acknowledges also the financial support he received from the Center for Advanced Studies in Mathematics of the Ben Gurion University of the Negev during his visit Marz 15 - 22, 2016.}
\begin{document}

\begin{abstract}
The paper studies  the free locally convex space $L(X)$ over  a Tychonoff space $X$. Since for infinite $X$ the space $L(X)$ is never metrizable (even not Fr\'echet-Urysohn), a possible applicable generalized metric property for $L(X)$ is welcome. We propose a concept (essentially weaker than  first-countability) which is known under the name  a $\GG$-base. A space $X$ has a {\em $\GG$-base} if for every $x\in X$ there is a base $\{ U_\alpha : \alpha\in\NN^\NN\}$ of neighborhoods at $x$ such that $U_\beta \subseteq U_\alpha$ whenever $\alpha\leq\beta$ for all $\alpha,\beta\in\NN^\NN$, where $\alpha=(\alpha(n))_{n\in\NN}\leq \beta=(\beta(n))_{n\in\NN}$ if $\alpha(n)\leq\beta(n)$ for all $n\in\NN$.
We show that if $X$ is an Ascoli $\sigma$-compact space, then $L(X)$ has a $\GG$-base if and only if $X$ admits an Ascoli uniformity $\UU$ with a $\GG$-base. We prove that if $X$ is a $\sigma$-compact Ascoli space of $\NN^\NN$-uniformly compact type, then $L(X)$  has a $\GG$-base. As an application we show: (1) if  $X$ is  a metrizable space, then $L(X)$ has a $\GG$-base if and only if $X$ is $\sigma$-compact, and (2) if $X$ is a countable Ascoli space, then $L(X)$ has a $\GG$-base if and only if $X$ has a $\GG$-base.
\end{abstract}

%%%%%%%%%%%%%%%%%%%%%%%%%%%
%%%%%%%%%%%%%%%%%%%%%%%%%%%
%%%%%%%%%%%%%%%%%%%%%%%%%%%
%%%%%%%%%%%%%%%%%%%%%%%%%%%

\maketitle

%%%%%%%%%%%%%%%%%%%%%%%%%%%
%%%%%%%%%%%%%%%%%%%%%%%%%%%

\section{Introduction}

%%%%%%%%%%%%%%%%%%%%%%%%%%%
%%%%%%%%%%%%%%%%%%%%%%%%%%%
\bigskip
The class of free locally convex spaces $L(X)$ over a (Tychonoff) space $X$ is one of the most important classes in the category of locally convex spaces and continuous operators. This class was introduced by Markov \cite{Mar} and intensively studied over the last half-century, see for example \cite{ArT,Flo2,Gab-MSJ,Rai,Usp2}.
Recall that the {\em  free locally convex space} $L(X)$ over a  space $X$ is a pair consisting of a locally convex space $L(X)$ and  a continuous mapping $i: X\to L(X)$ such that every  continuous mapping $f$ from $X$ to a locally convex space $E$ gives rise to a unique continuous linear operator ${\bar f}: L(X) \to E$  with $f={\bar f} \circ i$. The free locally convex space $L(X)$  always exists and is  unique.
%The set $X$ forms a Hamel basis for $L(X)$, and  the mapping $i$ is a topological embedding, see \cite{Flo2,Rai,Tkac,Usp}.

It is well-known that $L(X)$ is metrizable if and only if $X$ is finite. Moreover, $L(X)$ is a $k$-space if and only if $X$ is a countable discrete space, see \cite{Gabr}. Therefore,  seeking for concrete objects $L(X)$  carrying   some \emph{small base}
 at zero might be interesting for specialist both from topology and functional analysis.

One of such possible  concepts extending metrizability is related with locally convex spaces having a $\GG$-base. Following \cite{GKL}, a topological space $X$ has a {\em $\GG$-base at a point $x\in X$} if it has a base $\{ U_\alpha : \alpha\in\NN^\NN\}$ of neighborhoods at $x$ such that $U_\beta \subseteq U_\alpha$ whenever $\alpha\leq\beta$ for all $\alpha,\beta\in\NN^\NN$, where $\alpha=(\alpha(n))_{n\in\NN}\leq \beta=(\beta(n))_{n\in\NN}$ if $\alpha(n)\leq\beta(n)$ for all $n\in\NN$;  $X$ has a {\em $\GG$-base} if it has a $\GG$-base at each point $x\in X$.

Originally, the concept of a $\GG$-base has been formally introduced in \cite{FKLS} in the realm of locally convex spaces for studying $(DF)$-spaces, $C(X)$-spaces and spaces in  the class $\GG$ in the sense of Cascales and Orihuela, see \cite{kak}.  Every quasibarrelled locally convex space  with a $\GG$-base has countable tightness both in the original and the weak topology, respectively; each precompact set in a locally convex space with a $\GG$-base is metrizable, see again \cite{kak}. It is easy to see that every metrizable group has a $\GG$-base at the identity. Topological groups with a $\GG$-base are thoroughly studied in \cite{GKL}, see also \cite{CFHT,GK-GMS2,GKL2}.

Being motivated by several results of the above type (see \cite{kak} also for a long list of references), the authors in \cite{GKL} posed the following general problem:
\begin{problem}[\cite{GKL}] \label{p:Free-G-base}
For which  spaces $X$ the free locally convex space $L(X)$ has a $\GG$-base?
\end{problem}
For a  space $X$ we denote by $C_p(X)$ and $\CC(X)$ the space $C(X)$ of all continuous real-valued functions on $X$ endowed with the pointwise topology $\tau_p$ and the compact-open topology $\tau_k$, respectively.
 Recall that a space $X$ is called an {\em  Ascoli space} if every  compact subset $\KK$ of $\CC(X)$ is evenly continuous \cite{BG}. In other words, $X$ is Ascoli if and only if the compact-open topology of $\CC(X)$ is Ascoli in the sense of \cite[p.45]{mcoy}.

Using a deep result of Uspenski\u{\i} \cite{Usp2}, for a wide class of topological spaces
$X$ we show that  Problem \ref{p:Free-G-base} can be reformulated in the term of  function spaces $C(X)$. %The main result of  Section \ref{sec:1} is the following

\begin{theorem} \label{t:Free-G-base-Main}
Let $X$ be a Dieudonn\'{e} complete  Ascoli space (in particular, $X$ is a paracompact $k$-space or a metrizable space). Then  $L(X)$ has a $\GG$-base if and only if  $\CC(X)$ has a compact resolution swallowing compact subsets.
\end{theorem}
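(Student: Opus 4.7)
\emph{Proof plan.} I would work inside the canonical duality between $L(X)$ and $C(X)$ (its continuous dual) and translate the data on each side via polars: a $\GG$-base at $0$ in $L(X)$ should correspond, under polarity, to an $\NN^\NN$-indexed increasing family of compact subsets of $\CC(X)$ swallowing the compacts. By homogeneity it suffices to produce a $\GG$-base at $0$, and by passing to closed absolutely convex hulls one may assume throughout that the $0$-neighborhoods and the compact sets in sight are closed and absolutely convex.

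The technical heart, and the main obstacle I expect, is the following polar correspondence, which I would extract from Uspenski\u{\i}'s deep result: under the hypotheses of the theorem, a closed absolutely convex $U\subseteq L(X)$ is a $0$-neighborhood if and only if $U^\circ\subseteq C(X)$ is compact in $\CC(X)$. The non-trivial direction uses Uspenski\u{\i}'s embedding $L(X)\hookrightarrow\CC(\CC(X))$, available for Dieudonn\'e complete $X$, which characterizes the basic $0$-neighborhoods of $L(X)$ as polars of compact absolutely convex subsets of $\CC(X)$. Given such a $U$, then $U\supseteq K^\circ$ for some compact absolutely convex $K\subseteq\CC(X)$, so $U^\circ\subseteq K^{\circ\circ}=K$; Alaoglu yields $\tau_p$-compactness of $U^\circ$, and since $\tau_p$ and $\tau_k$ are comparable compact Hausdorff topologies on $K$ they coincide there, upgrading this to $\tau_k$-compactness. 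The converse is immediate from the embedding together with the bipolar theorem. The Ascoli hypothesis enters via Ascoli's theorem (characterizing compacts in $\CC(X)$ as evenly continuous pointwise bounded sets), combined with quasi-completeness of $\CC(X)$ — automatic when $X$ is a paracompact $k$-space or metrizable, since then $X$ is a $k_\RR$-space — to ensure that closed absolutely convex hulls of compact sets in $\CC(X)$ remain compact.

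Granted this correspondence, both implications become routine polar manipulation. $(\Rightarrow)$ Given a $\GG$-base $\{U_\alpha:\alpha\in\NN^\NN\}$ of closed absolutely convex $0$-neighborhoods, set $K_\alpha:=U_\alpha^\circ$; these are compact in $\CC(X)$, they grow with $\alpha$ (polarity reverses inclusion), and they cover $C(X)$ since each $f\in C(X)$ is bounded on some $U_\alpha$. For swallowing, a compact $K\subseteq\CC(X)$ may be replaced by its closed absolutely convex hull $\widetilde K$, still compact; then $\widetilde K^\circ$ is a $0$-neighborhood containing some $U_\alpha$, and $K\subseteq\widetilde K=\widetilde K^{\circ\circ}\subseteq U_\alpha^\circ=K_\alpha$. $(\Leftarrow)$ Given a compact resolution $\{K_\alpha\}$ swallowing compacts in $\CC(X)$, set $U_\alpha:=K_\alpha^\circ$; these are $0$-neighborhoods, decrease with $\alpha$, and form a base because any closed absolutely convex $0$-neighborhood $V$ has compact polar $V^\circ$ (by the correspondence), which is swallowed by some $K_\alpha$, giving $U_\alpha=K_\alpha^\circ\subseteq V^{\circ\circ}=V$ by the bipolar theorem.
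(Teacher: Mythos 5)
Your plan is essentially the paper's own argument: Uspenski\u{\i}'s description of the topology of $L(X)$ plus polar duality in the pair $\big(L(X),C(X)\big)$, with the Ascoli hypothesis used to translate between compact and equicontinuous pointwise bounded subsets of $\CC(X)$. The only structural difference is cosmetic: the paper first passes to the completion, which by Theorem 5 of \cite{Usp2} is $(M_c(\mu X),\tau_e)$ with $\tau_e$ the topology of uniform convergence on equicontinuous pointwise bounded subsets of $C(X)$, uses that a $\GG$-base exists in $L(X)$ iff it exists in the completion (Proposition 2.7 of \cite{GKL}), and then carries out exactly your two polar computations there (Proposition \ref{p:G-L-Mc}); you stay inside $L(X)$ and encode the same information as a description of its $0$-neighborhoods. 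Your $(\Rightarrow)$ and $(\Leftarrow)$ manipulations coincide with the proof of Proposition \ref{p:G-L-Mc}, including the Alaoglu/coincidence-of-comparable-compact-topologies step.

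Two points in your justification of the key correspondence need repair. First, Uspenski\u{\i}'s theorem alone (for Dieudonn\'e complete $X$) does \emph{not} give basic $0$-neighborhoods of $L(X)$ as polars of compact subsets of $\CC(X)$; it gives them as polars of equicontinuous pointwise bounded sets. To replace these by compact sets you must already use the Ascoli hypothesis (every compact subset of $\CC(X)$ is evenly continuous, hence equicontinuous and pointwise bounded) together with the ``if'' part of the Ascoli theorem (Fact \ref{f:Ascoli-Free-Ck}), so Ascoli enters at this step and not only in the hull-compactness step. Second, and more seriously as written, you secure compactness of closed absolutely convex hulls of compact sets via quasi-completeness of $\CC(X)$, justified only when $X$ is a paracompact $k$-space or metrizable; the actual hypothesis is merely ``Dieudonn\'e complete Ascoli'', which does not provide quasi-completeness (that is tied to $X$ being a $k_\RR$-space, a strictly stronger property than Ascoli). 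The correct, completeness-free argument is the paper's: if $K\subseteq\CC(X)$ is compact, then by Ascoli it is equicontinuous and pointwise bounded, its absolutely convex hull is again equicontinuous and pointwise bounded, and Fact \ref{f:Ascoli-Free-Ck} makes its $\tau_k$-closure compact; identifying this closure with the bipolar then uses that it is $\tau_p$-compact, hence $\sigma\big(C(X),L(X)\big)$-closed (equivalently, the paper's remark that bounded convex sets have the same closure in $\tau_k$ and in the weak topology). With these two adjustments your proposal matches the paper's proof.
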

Recall that a family $\KK =\{ K_\alpha : \alpha\in \NN^\NN\}$ of compact subsets of a space $Z$ is called a {\em compact resolution} if $\KK$ covers $Z$ and $K_\alpha  \subseteq  K_\beta$ whenever $\alpha\leq\beta$ for all $\alpha,\beta\in\NN^\NN$.  Following Christensen \cite{Chri}, we say that $\KK$ {\em swallows compact sets of $Z$} if for every compact subset $K$ of $Z$ there is an $\alpha\in\NN^\NN$ such that $K\subseteq K_\alpha$. The importance of this concept  follows from the following deep result of Christensen: \emph{A metrizable and separable space $Z$ is Polish if and only if $Z$ has a compact resolution swallowing compact sets.} Consequently, since $\CC(X)$ is Polish if $X$ is locally compact metrizable and separable,  by Theorem \ref{t:Free-G-base-Main} the space $L(X)$ has a $\GG$-base. These results and Theorem \ref{t:Free-G-base-Main}  motivate the following question:
\begin{problem} \label{p:Free-compact-resol}
For which  spaces $X$, the space $\CC(X)$ has a compact resolution (swallowing  compact sets)?
\end{problem}
This problem is of independent interest because (see for example \cite[Theorem 9.9]{kak})  $\CC(X)$ has a compact resolution if and only if $\CC(X)$ is $K$-analytic, i.e. $\CC(X)$  is the image under an upper semi-continuous compact-valued map defined in $\NN^\NN$; the same result holds for $C_{p}(X)$, see  \cite{Tkachuk}. Moreover,  Tkachuk proved in \cite{Tkachuk}  that $C_p(X)$ has a compact resolution swallowing  compact sets if and only if $X$ is a countable discrete space.

Christensen had already proved the following result (see also Corollary \ref{c:Free-L(X)-metr} below): {\em If $X$ is a separable metrizable space, then   $\CC(X)$  has a compact resolution if and only if $X$ is $\sigma$-compact.} Below we strengthen this result by showing that under the same assumption on $X$ the space $\CC(X)$ has even a compact resolution swallowing  compact sets, see Corollary \ref{c:Free-resol-metric} below. These results motivate the question: {\em For which $\sigma$-compact spaces $X$ the space $\CC(X)$ has a  compact resolution (swallowing  compact sets)}? The aforementioned results explain our study of  functions spaces with compact resolutions, see  Section \ref{sec:2}.

In Section \ref{sec:1} we prove Theorem \ref{t:Free-G-base-Main} and obtain the following partial answers to Problem \ref{p:Free-G-base}.
\begin{theorem} \label{t:Free-G-base-resolution}
Let $X$ be an Ascoli $\sigma$-compact space. Then $L(X)$ has a $\GG$-base if and only if $X$ admits an Ascoli uniformity $\UU$ with a $\GG$-base.
\end{theorem}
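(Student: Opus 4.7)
The plan is to exploit the duality $L(X)^{\ast}=C(X)$ coming from the universal property of the free locally convex space, combined with the standard principle that a base of $0$-neighbourhoods in a locally convex space is given by polars of the equicontinuous subsets of its dual. For $L(X)$, the equicontinuous subsets of $C(X)$ correspond to families of continuous functions on $X$ that are simultaneously pointwise bounded and equicontinuous with respect to some compatible uniformity on $X$; a $\GG$-base at $0\in L(X)$ should therefore translate, via polars, into a suitable $\NN^{\NN}$-indexed family of such equicontinuous pointwise-bounded sets, and in turn into a $\GG$-base of entourages on $X$. The $\sigma$-compactness of $X$, exploited via a fixed cover $X=\bigcup_{n}K_{n}$ by compact sets, is what allows a single $\NN^{\NN}$-parameter to carry both the equicontinuity data and the sequence of pointwise bounds on the $K_{n}$.

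For $(\Leftarrow)$, let $\UU$ be an Ascoli uniformity on $X$ with $\GG$-base $\{U_{\alpha}:\alpha\in\NN^{\NN}\}$. For each $\alpha\in\NN^{\NN}$ I would set
$$H_{\alpha}=\bigl\{f\in C(X):\ |f(x)-f(y)|\leq 1\ \text{whenever}\ (x,y)\in U_{\alpha},\ \|f\|_{K_{n}}\leq\alpha(n)\ \forall n\in\NN\bigr\},$$
which is $\UU$-equicontinuous and pointwise bounded, and hence relatively compact in $\CC(X)$ by the Ascoli property of $\UU$. The polars $V_{\alpha}:=H_{\alpha}^{\circ}\subseteq L(X)$ are then closed convex balanced $0$-neighbourhoods that decrease in $\alpha$. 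To verify they form a base at $0$, take any $0$-neighbourhood $W\subseteq L(X)$; it contains $H^{\circ}$ for some equicontinuous and pointwise bounded $H\subseteq C(X)$. Choose $\alpha'$ so that $U_{\alpha'}$ witnesses the equicontinuity of $H$, and $\alpha''(n)$ dominating $\sup_{f\in H}\|f\|_{K_{n}}$; the coordinate-wise maximum $\alpha:=\max(\alpha',\alpha'')$ then satisfies $H\subseteq H_{\alpha}$, whence $V_{\alpha}\subseteq W$.

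For $(\Rightarrow)$, starting from a $\GG$-base $\{V_{\alpha}\}$ at $0\in L(X)$, I would take polars $H_{\alpha}:=V_{\alpha}^{\circ}\subseteq C(X)$ and form entourages
$$U_{\alpha}:=\bigl\{(x,y)\in X\times X:\ |f(x)-f(y)|\leq 1\ \forall f\in H_{\alpha}\bigr\}.$$
Since $X\hookrightarrow L(X)$ is a topological embedding, one checks that $\{U_{\alpha}\}$ is a base of a uniformity $\UU$ on $X$ compatible with its topology and inheriting a $\GG$-base. The Ascoli property of $\UU$ is then deduced from the hypothesis that $X$ itself is Ascoli, together with $\sigma$-compactness, which reduces the needed equicontinuity-plus-boundedness statement to compactness in $\CC(X)$.

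The main obstacle, I expect, will be the index-merging step in $(\Leftarrow)$: combining the equicontinuity index $\alpha'$ with the sequence of pointwise bounds $\alpha''(n)$ into a single $\NN^{\NN}$-coordinate is the diagonalisation at the heart of the argument, and it relies essentially on $\sigma$-compactness. A secondary technical point is verifying in $(\Rightarrow)$ that the constructed uniformity is genuinely an Ascoli uniformity, which should reduce, via the characterisation of compact subsets of $\CC(X)$ on Ascoli spaces, to the hypothesis that $X$ is Ascoli.
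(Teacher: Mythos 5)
Your general route---dualizing directly, so that a $\GG$-base at $0\in L(X)$ corresponds under polars to an $\NN^\NN$-indexed swallowing family of equicontinuous pointwise bounded subsets of $C(X)$---is viable and genuinely different from the paper, which instead passes to the completion $\overline{L(X)}=(M_c(X),\tau_e)$ via Uspenski\u{\i}'s theorem and uses ``$\CC(X)$ has a compact resolution swallowing compact sets'' as the intermediary (Theorem \ref{t:Free-G-base} together with Propositions \ref{p:G-L-Mc}, \ref{p:Free-G-base-ascoli} and \ref{p:Free-G-base-uniform}). However, your $(\Leftarrow)$ direction has a genuine gap: the set $H_\alpha=\{f\in C(X):\,|f(x)-f(y)|\le 1 \text{ on } U_\alpha,\ \|f\|_{K_n}\le\alpha(n)\ \forall n\}$ is \emph{not} equicontinuous; the single entourage $U_\alpha$ only controls oscillation at scale $1$. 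For example, on $X=\RR$ the functions $f_m(x)=\min(1,m|x|)$ satisfy $|f_m(x)-f_m(y)|\le 1$ everywhere and $\|f_m\|_\infty\le 1$, so they belong to every $H_\alpha$, yet they are not equicontinuous at $0$. Consequently $H_\alpha$ is not an equicontinuous subset of the dual $C(X)$ of $L(X)$, and its polar $V_\alpha=H_\alpha^{\circ}$ is \emph{not} a neighbourhood of zero (if $H^{\circ}$ were a neighbourhood, then $H\subseteq H^{\circ\circ}$ would be equicontinuous and pointwise bounded), so $\{V_\alpha\}$ cannot be a base. The repair is not the two-index merge you propose (a coordinatewise maximum of $\alpha'$ and $\alpha''$): equicontinuity needs an oscillation bound $1/n$ for \emph{every} $n$, each witnessed by its own entourage $U_{\alpha_n}$, so countably many elements of $\NN^\NN$ must be merged into a single index via a partition of $\NN$ into infinitely many infinite sets---precisely the encoding used in Proposition \ref{p:Free-G-base-uniform}, where the sets are of the form $A_\alpha\cap B_\alpha$ with $B_\alpha=\bigcap_{n}\{f:\,|f(x)-f(y)|\le 1/n\ \forall (x,y)\in U_{\alpha_n}\}$. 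With that correction the swallowing step also needs Fact \ref{f:Ascoli-Free-Ck} plus the Ascoli property of $\UU$: a given equicontinuous pointwise bounded set $H$ has compact closure in $\CC(X)$, and only then does the Ascoli uniformity supply, for each $n$, an entourage witnessing oscillation $\le 1/n$ on all of $H$.

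Two further points should not stay implicit. First, your starting principle---that the polars of equicontinuous pointwise bounded subsets of $C(X)$ form a base of $0$-neighbourhoods of $L(X)$---is exactly where the paper invokes Uspenski\u{\i}'s theorem (applicable because a $\sigma$-compact space is Dieudonn\'e complete); if you wish to avoid it, you must prove this description directly, e.g.\ from the fact that $L(X)$ carries the finest locally convex topology making $X\hookrightarrow L(X)$ continuous, so that for such $H$ the seminorm $u\mapsto\sup_{f\in H}|\bar f(u)|$ is continuous. Second, your $(\Rightarrow)$ direction is essentially sound and is in fact more direct than the paper's (which goes through the compact resolution of $\CC(X)$ and the embedding into $\CC(\CC(X))$), but the Ascoli property of the constructed uniformity again rests on the same duality statement: a compact $K\subseteq\CC(X)$ is equicontinuous and pointwise bounded because $X$ is Ascoli, hence $\e K^{\circ}$ is a $0$-neighbourhood, and choosing $V_\alpha\subseteq\e K^{\circ}$ yields the required entourage; $\sigma$-compactness plays no role in that half.
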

Theorem  \ref{t:Free-G-base-resolution} needs a new concept  which is stronger than to be an Ascoli space.
\begin{definition} {\em
A uniformity $\UU$ on a  space $X$ is said to be {\em Ascoli} if $\UU$ is admissible and any compact subset $K$ of $\CC(X)$ is uniformly equicontinuous with respect to $\UU$, i.e. for every $\e>0$ there is $U\in\UU$ such that $|f(x)-f(y)|<\e$ for every $f\in K$ and each $(x,y)\in U$. We say that $X$ is a {\em uniformly Ascoli space} if $X$ has an Ascoli uniformity.}
\end{definition}

We provide  also a sufficient condition on a $\sigma$-compact space $X$ for which $L(X)$ has a $\GG$-base. This approach requires some additional concept.
\begin{definition}  \label{def:Free-uniform-compact}{\em
A topological space $X$ is a space of {\em $\NN^\NN$-uniformly compact type} if for every compact subset $K$ of $X$ the set $\Delta_K =\{ (x,x)\in X\times X: x\in K\}$ has an $\NN^\NN$-decreasing base $\{ U_\alpha: \alpha\in \NN^\NN\}$ of open neighborhoods in $X\times X$, i.e. for every open neighborhood $U$ of $\Delta_K$ there is $\alpha\in \NN^\NN$ such that $\Delta_K \subseteq U_\alpha \subseteq U$.}
\end{definition}

%As usual a decreasing $\NN^\NN$-base of a subset $A$ of $X$ is called a {\em $\GG$-base} of $A$.
\begin{theorem} \label{t:Free-LCS-G-base}
Let $X$ be a $\sigma$-compact Ascoli space. If $X$ is of $\NN^\NN$-uniformly compact type, then $L(X)$  has a $\GG$-base.
\end{theorem}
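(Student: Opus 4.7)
The plan is to apply Theorem~\ref{t:Free-G-base-resolution}: it suffices to exhibit on $X$ an Ascoli uniformity $\UU$ admitting a $\GG$-base. Using $\sigma$-compactness write $X=\bigcup_{n\in\NN}K_n$ with $K_n$ compact and $K_n\subseteq K_{n+1}$; and, for each $n$, use the $\NN^\NN$-uniformly compact type assumption to fix a $\GG$-base $\{U^n_\alpha:\alpha\in\NN^\NN\}$ of symmetric open neighborhoods of $\Delta_{K_n}$ in $X\times X$ (symmetrize by intersecting with the inverse if necessary). Fixing a bijection $\NN\to\NN\times\NN$ decomposes each $\alpha\in\NN^\NN$ into slices $(\alpha^{(n)})_{n\in\NN}$ with $\alpha^{(n)}\in\NN^\NN$ and $\alpha\le\beta\Leftrightarrow\alpha^{(n)}\le\beta^{(n)}$ for all $n$. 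The candidate base is
\[
V_\alpha \;:=\; \bigcup_{n\in\NN} U^n_{\alpha^{(n)}},
\]
a family of open symmetric neighborhoods of $\Delta_X=\bigcup_n\Delta_{K_n}$ which is $\NN^\NN$-decreasing in $\alpha$.

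The Ascoli condition is the straightforward half. Given a compact $\KK\subseteq\CC(X)$ and $\e>0$, the Ascoli property of $X$ makes $\KK$ equicontinuous on each $K_n$, so the tube lemma produces an open neighborhood $O_n$ of $\Delta_{K_n}$ in $X\times X$ with $|f(x)-f(y)|<\e$ for every $f\in\KK$ and $(x,y)\in O_n$. Using the $\GG$-base pick $\alpha^n\in\NN^\NN$ with $U^n_{\alpha^n}\subseteq O_n$, and choose $\alpha\in\NN^\NN$ with $\alpha^{(n)}\ge\alpha^n$ for every $n$; then $V_\alpha\subseteq\bigcup_nO_n$ witnesses uniform $\e$-equicontinuity. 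Admissibility is similar: for $x\in X$ in an open $W$, force each $U^n_{\alpha^{(n)}}[x]\subseteq W$ slice by slice---if $x\in K_n$, separate the compact $\Delta_{K_n}$ from the closed set $\{x\}\times(X\setminus W)$ by regularity of $X\times X$ and invoke the $\GG$-base; if $x\notin K_n$, Tychonoff separation yields an open $N\supseteq K_n$ with $x\notin N$, and taking $U^n_{\alpha^{(n)}}\subseteq N\times N$ kills the $x$-section entirely. Combining the slice choices gives a single $\alpha$ with $V_\alpha[x]\subseteq W$.

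The main obstacle is the uniformity composition axiom: for each $\alpha$ one must produce $\beta\ge\alpha$ with $V_\beta\circ V_\beta\subseteq V_\alpha$. A composed pair $(x,z)$ arises from $(x,y)\in U^m_{\beta^{(m)}}$ and $(y,z)\in U^n_{\beta^{(n)}}$ where the middle point $y$ may lie outside $K_m\cup K_n$, so the composition property of the unique uniformity on a compact subset is not directly applicable. The strategy is to refine each $U^n_\alpha$ to a sub-neighborhood of tube form $\bigcup_i W^n_{\alpha,i}\times W^n_{\alpha,i}$ provided by the tube lemma on the compact $K_n$, and to arrange that the cover $\{W^n_{\beta^{(n)},i}\}_i$ star-refines every cover $\{W^m_{\alpha^{(m)},j}\}_j$ for $m\le n$; this is possible because $K_n$ is paracompact and only countably many indices $m\le n$ intervene. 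Under this refinement the composed pair lies inside a single tube cell of some $U^k_{\alpha^{(k)}}\subseteq V_\alpha$, and the whole construction preserves the $\NN^\NN$-indexing after a harmless reparametrization. The resulting uniformity $\UU$ is admissible, Ascoli, and has $\{V_\alpha\}$ as a $\GG$-base, so Theorem~\ref{t:Free-G-base-resolution} gives that $L(X)$ has a $\GG$-base, as required.
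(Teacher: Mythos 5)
The step that does not close is precisely the one you call the main obstacle: verifying that $\{V_\alpha:\alpha\in\NN^\NN\}$ is a base of a uniformity, i.e.\ the composition axiom. The condition you impose (each level-$n$ cover $\{W^n_{\beta^{(n)},i}\}_i$ star-refines the level-$m$ covers $\{W^m_{\alpha^{(m)},j}\}_j$ for $m\le n$) constrains each level-$n$ cover separately, whereas a composed pair $(x,z)$ involves one cell $A=W^m_{\beta^{(m)},i}$ and one cell $B=W^n_{\beta^{(n)},j}$ taken from two \emph{different} covers that merely meet at the middle point $y$. Star-refinement then only gives that $A$ lies in some coarse cell and $B$ lies in some (possibly different) coarse cell; it does not put $A\cup B$ inside a single cell, which is exactly what ``the composed pair lies inside a single tube cell of some $U^k_{\alpha^{(k)}}$'' requires. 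To force the union into one cell you would need the merged family over all levels to (bary-)star-refine the coarse covers, a joint condition tying the choice at stage $m$ to the choices at the infinitely many later stages $n>m$, which your stage-by-stage scheme does not provide. In addition, the star-refinement theorem you invoke concerns covers of a paracompact space by its own open sets, while your cells are open in $X$ and the points $x,y,z$ may lie far outside $K_m\cup K_n$, where compactness of $K_n$ gives no control over how the cells behave; so ``$K_n$ is paracompact'' does not yield the refinements you need relative to $X$. Finally, replacing every $U^n_\alpha$ (for all $\alpha\in\NN^\NN$) by a tube-form sub-neighborhood must be done so that the modified family is still decreasing in $\alpha$; this is not automatic, and the ``harmless reparametrization'' is not carried out. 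The admissibility and Ascoli computations are fine, but they are moot until $\{V_\alpha\}$ is actually shown to generate an (admissible) uniformity.

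For contrast, the paper never builds a uniformity on $X$ out of the diagonal bases. It proves Proposition \ref{p:Free-G-base-uniform}(ii): from $\sigma$-compactness and the $\NN^\NN$-uniformly compact type assumption one directly manufactures a compact resolution swallowing compact sets in $\CC(X)$, namely $K_\alpha=A_\alpha\cap B_\alpha$, where $A_\alpha$ imposes pointwise bounds on the compact pieces $C_k$ and $B_\alpha$ imposes oscillation at most $1/n$ on $U_{\alpha_n,n}$; equicontinuity plus Fact \ref{f:Ascoli-Free-Ck} gives compactness, and the Ascoli property of $X$ gives the swallowing. Since a $\sigma$-compact space is Lindel\"{o}f, hence Dieudonn\'{e} complete, Theorem \ref{t:Free-G-base-Main} (proved via Uspenski\u{\i}'s identification of the completion of $L(X)$ with $(M_c(X),\tau_e)$ and Proposition \ref{p:G-L-Mc}) then yields the $\GG$-base of $L(X)$. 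Note also that if you insist on routing through Theorem \ref{t:Free-G-base-resolution}, the paper's only mechanism for producing an Ascoli uniformity with a $\GG$-base is Proposition \ref{p:Free-G-base-ascoli}, which pulls back the uniformity of $\CC(\CC(X))$ along the canonical embedding $\delta$ and already presupposes a compact resolution swallowing compact sets in $\CC(X)$ --- exactly the ingredient your direct construction was meant to bypass. So either the uniformity-base gap must be filled by a genuinely new argument, or you should switch to the resolution-based route of Proposition \ref{p:Free-G-base-uniform}(ii).
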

It is easy to show, see Proposition \ref{p:Free-LCS-G-base} below, that if $X$ is a metrizable space or  a countable space such that every point $x\in X$ has a $\GG$-base, then $X$ is of $\NN^\NN$-uniformly compact type. So Theorem \ref{t:Free-LCS-G-base} with
 Corollary \ref{c:Free-resol-metric} imply
\begin{corollary} \label{c:Free_G-base-metric}
If $X$ is  a metrizable space, then $L(X)$ has a $\GG$-base if and only if $X$ is $\sigma$-compact.
\end{corollary}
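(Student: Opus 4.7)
The plan is to derive the corollary directly from the two previous results that the paper has already assembled, applying one for each direction of the biconditional.

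For the sufficient direction, assume that $X$ is $\sigma$-compact and metrizable. I would invoke Theorem \ref{t:Free-LCS-G-base}, whose hypotheses require $X$ to be a $\sigma$-compact Ascoli space of $\NN^\NN$-uniformly compact type. A metrizable space is a paracompact $k$-space, hence Ascoli. The paper explicitly foreshadows Proposition \ref{p:Free-LCS-G-base}, asserting that every metrizable space is of $\NN^\NN$-uniformly compact type; granting this, all hypotheses of Theorem \ref{t:Free-LCS-G-base} are met and $L(X)$ has a $\GG$-base.

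For the necessary direction, suppose $L(X)$ has a $\GG$-base. Since metrizable spaces are Dieudonn\'e complete as well as Ascoli, Theorem \ref{t:Free-G-base-Main} applies and yields that $\CC(X)$ has a compact resolution swallowing compact subsets. I would then feed this into Corollary \ref{c:Free-resol-metric}, the strengthened form of Christensen's theorem that the paper proves for metrizable $X$, which characterizes the existence of such a compact resolution in $\CC(X)$ by $\sigma$-compactness of $X$. This forces $X$ to be $\sigma$-compact, completing the biconditional.

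Thus the corollary itself is essentially a two-line deduction once the earlier pieces are in place, and no separate obstacle appears at this level. The real technical work has been pushed into the prerequisites: the Uspenski\u{\i}-based reduction inside Theorem \ref{t:Free-G-base-Main}, the construction of a $\GG$-base of $L(X)$ from a $\sigma$-compact exhaustion together with the $\NN^\NN$-indexed neighborhood bases of diagonals $\Delta_K$ (Theorem \ref{t:Free-LCS-G-base}), and the verification in Corollary \ref{c:Free-resol-metric} that metrizability suffices for Christensen's compact-resolution result to be promoted to one that swallows compact sets. The step I would expect to consume the most thought in the background is precisely the verification of the $\NN^\NN$-uniformly compact type property for metrizable $X$, which requires fabricating, from a compatible metric and a chain of finite covers of $K$ by balls of radius $1/\alpha(n)$, an $\NN^\NN$-decreasing base of neighborhoods of $\Delta_K$ that exhausts arbitrary open neighborhoods of the diagonal — a diagonal argument that, although routine in spirit, is the only place where the metric structure is genuinely used.
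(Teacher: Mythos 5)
Your proposal is correct and follows essentially the same route as the paper: sufficiency via Proposition \ref{p:Free-LCS-G-base}(i) feeding Theorem \ref{t:Free-LCS-G-base}, and necessity via Theorem \ref{t:Free-G-base-Main} (metrizable spaces being Dieudonn\'e complete Ascoli) combined with Corollary \ref{c:Free-resol-metric}. No gaps beyond the cited prerequisites.
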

 In particular, the space $L(\mathbb{Q})$ has a $\GG$-base.
\begin{corollary} \label{c:Free_G-base-countable}
If $X$ is a countable Ascoli space, then $L(X)$ has a $\GG$-base if and only if $X$ has a $\GG$-base.
\end{corollary}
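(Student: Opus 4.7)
My plan is to reduce the corollary to results already stated in the paper (Theorem~\ref{t:Free-LCS-G-base} together with the yet-to-appear Proposition~\ref{p:Free-LCS-G-base}) for one direction, and to exploit the standard embedding $i:X\hookrightarrow L(X)$ for the other. The two directions are essentially independent and short; both use that a countable space is automatically $\sigma$-compact, being a countable union of its singletons.

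For the sufficiency direction, suppose $X$ is a countable Ascoli space with a $\GG$-base (in particular, at every point). Since $X$ is countable it is $\sigma$-compact, so I can apply Proposition~\ref{p:Free-LCS-G-base}, whose ``countable'' branch tells me precisely that such an $X$ is of $\NN^\NN$-uniformly compact type. Combined with the Ascoli property, the hypotheses of Theorem~\ref{t:Free-LCS-G-base} are met, and that theorem directly yields that $L(X)$ has a $\GG$-base. So this direction is a one-line application of two earlier results.

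For the necessity direction, I would use the well-known fact that the canonical map $i:X\to L(X)$ is a topological embedding, so $X$ may be identified with the subspace $i(X)\subseteq L(X)$. If $L(X)$ has a $\GG$-base then, in particular, at each point $i(x)$ there is an $\NN^\NN$-decreasing base $\{V_\alpha:\alpha\in\NN^\NN\}$ of neighborhoods of $i(x)$ in $L(X)$; tracing back through $i$, the family $\{i^{-1}(V_\alpha):\alpha\in\NN^\NN\}$ is an $\NN^\NN$-decreasing neighborhood base at $x$ in $X$. Hence $X$ has a $\GG$-base at every point, i.e.\ a $\GG$-base. This half does not require the Ascoli or countability hypotheses at all.

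There is no real obstacle here: the only thing one must be careful about is that $\GG$-bases are a pointwise notion, so in the necessity direction one must verify the base property at every $x\in X$, not merely at $0\in L(X)$. This is automatic once one remembers that having a $\GG$-base in $L(X)$ means at every point of $L(X)$, and that $\GG$-bases at a point pass to subspaces containing that point. The main conceptual content of the corollary is therefore entirely absorbed into Theorem~\ref{t:Free-LCS-G-base} and Proposition~\ref{p:Free-LCS-G-base}.
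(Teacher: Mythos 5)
Your proposal is correct and takes essentially the same route as the paper: the paper deduces the statement from Corollary \ref{c:Free-base-countable}, whose implication (iii)$\Rightarrow$(i) is exactly your combination of Proposition \ref{p:Free-LCS-G-base}(ii) with the $\sigma$-compactness of a countable space and Theorem \ref{t:Free-LCS-G-base} (itself the packaging of Propositions \ref{p:Free-LCS-G-base}, \ref{p:Free-G-base-uniform} and Theorem \ref{t:Free-G-base-Main}), and whose implication (i)$\Rightarrow$(iii) is precisely your observation that $X$ embeds into $L(X)$ and $\GG$-bases at a point pass to subspaces.
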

Note that Corollaries \ref{c:Free_G-base-metric} and \ref{c:Free_G-base-countable} are proved independently in \cite{BL} using  different methods.

%%%%%%%%%%%%%%%%%%%%%%%%%%%
%%%%%%%%%%%%%%%%%%%%%%%%%%%
%%%%%%%%%%%%%%%%%%%%%%%%%%%
%%%%%%%%%%%%%%%%%%%%%%%%%%%
%%%%%%%%%%%%%%%%%%%%%%%%%%%

\section{Compact resolutions in function spaces} \label{sec:2}

%%%%%%%%%%%%%%%%%%%%%%%%%%%
%%%%%%%%%%%%%%%%%%%%%%%%%%%
%%%%%%%%%%%%%%%%%%%%%%%%%%%
%%%%%%%%%%%%%%%%%%%%%%%%%%%
Recall that a subset $A$ of a topological space $X$ is called {\em functionally bounded} if every continuous function $f\in C(X)$ is bounded on $A$.
Recall also that a {\em resolution} $\AAA =\{ A_\alpha: \alpha\in\NN^\NN \}$ in $X$ is a cover of $X$ such that $A_\alpha  \subseteq  A_\beta$ whenever $\alpha\leq\beta$ for all $\alpha,\beta\in\NN^\NN$. If all $A_\alpha$ are functionally bounded, the resolution $\AAA$ is called {\em  functionally bounded}. Note also that by  a result of Calbrix, see \cite[Theorem 9.7]{kak}, if $C_{p}(X)$ is analytic, then $X$ is $\sigma$-compact.
Recall that a space $Z$ is called {\em analytic} if it is a continuous image of $\NN^\NN$.

We shall use the following fact, see Corollary 9.1 of \cite{kak}.
Recall that a (Tychonoff) space $X$ is {\em cosmic} if it is a continuous image of a separable metric space.
\begin{fact} \label{f:Free-G-base-cosmic}
Let $X$ be a cosmic space. Then $C_p(X)$ has a functionally bounded resolution if and only if  $X$ is $\sigma$-compact. Consequently,  if $\CC(X)$ has a compact resolution, then $X$ is $\sigma$-compact.
\end{fact}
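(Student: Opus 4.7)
For the \emph{if} direction I would proceed constructively. Writing $X=\bigcup_n K_n$ as an increasing union of compact sets, for each $\alpha\in\NN^\NN$ I put
\[
A_\alpha=\bigl\{f\in C(X):\sup_{x\in K_n}|f(x)|\le\alpha(n)\text{ for every }n\in\NN\bigr\}.
\]
Monotonicity in $\alpha$ is built in, and every $f\in C(X)$ lies in $A_\alpha$ upon choosing $\alpha(n):=\lceil\sup_{K_n}|f|\rceil$, so $\{A_\alpha\}_{\alpha\in\NN^\NN}$ is a cover. Functional boundedness of each $A_\alpha$ in $C_p(X)$ then reduces to the standard observation that inside $C_p(X)\subseteq\RR^X$ functional boundedness coincides with pointwise boundedness, which holds by construction.

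For the \emph{only if} direction my plan is to transport a given functionally bounded resolution $\{A_\alpha\}$ of $C_p(X)$ back to $X$. I would first replace each $A_\alpha$ by its closed absolutely convex hull; pointwise boundedness is preserved under convex combinations and pointwise limits in $\RR^X$, so the new sets are still functionally bounded and still form a resolution. Using the canonical closed topological embedding $\delta:X\hookrightarrow C_p(X)^{*}_{w^{*}}$, $x\mapsto\delta_x$, the natural candidates for a $\sigma$-compact cover of $X$ are the closed sets
\[
K^\alpha_n=\{x\in X:|f(x)|\le n\text{ for all }f\in A_\alpha\}=\delta^{-1}(nA_\alpha^\circ),
\]
which cover $X$ for each fixed $\alpha$ by the pointwise boundedness of $A_\alpha$.

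The hard part will be upgrading the sets $K^\alpha_n$ to genuinely compact subsets of $X$. A direct Alaoglu--Bourbaki argument will not suffice, because $C_p(X)$ need not carry its Mackey topology and so the polar $A_\alpha^\circ$ is not automatically weak$^{*}$-compact in the dual. This is exactly where the cosmic assumption enters: representing $X$ as a continuous image of a separable metric space $M$ induces a topological embedding $C_p(X)\hookrightarrow C_p(M)$, and one can then invoke the Calbrix--Tkachuk dichotomy for analytic resolution-admitting subsets of $\NN^\NN$ (cf.\ the proof of Corollary~9.1 in \cite{kak}) to rule out a closed copy of $\NN^\NN$ sitting inside $X$ and to conclude its $\sigma$-compactness.

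The consequence for $\CC(X)$ then falls out for free: since $\tau_p\le\tau_k$, any $\tau_k$-compact subset of $C(X)$ is $\tau_p$-compact, hence functionally bounded in $C_p(X)$; so a compact resolution of $\CC(X)$ is \emph{ipso facto} a functionally bounded resolution of $C_p(X)$, and the first part yields $\sigma$-compactness of $X$.
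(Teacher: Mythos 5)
The paper never proves this Fact --- it is quoted verbatim from \cite[Corollary 9.1]{kak} --- so the only question is whether your argument stands on its own, and it does not. The central flaw is in the ``if'' direction: it is simply false that functional boundedness coincides with pointwise boundedness inside $C_p(X)$. Functional boundedness of $A_\alpha$ means that \emph{every} continuous real function on the space $C_p(X)$ is bounded on $A_\alpha$, and this is much stronger than pointwise boundedness. Concretely, take $X=[0,1]$, so that your $A_\alpha$ is essentially a uniform ball $\{f:\|f\|_\infty\le \alpha(1)\}$. The sequence $f_n(x)=x^n$ lies in this ball and is closed and discrete in $C_p([0,1])$ (its only pointwise cluster point in $\RR^{[0,1]}$ is discontinuous); since $X$ is cosmic, $C_p(X)$ has a countable network, hence is Lindel\"{o}f and normal, so by the Tietze--Urysohn theorem the assignment $f_n\mapsto n$ extends to a continuous function on $C_p([0,1])$ which is unbounded on $A_\alpha$. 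Thus your sets are pointwise bounded but not functionally bounded, and the ``if'' direction collapses; the result at this level of the statement is genuinely delicate (functionally bounded subsets of a cosmic $C_p(X)$ are relatively compact, so a functionally bounded resolution is already a compact resolution), and cannot be obtained by the naive ball construction.

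The ``only if'' direction is also not a proof as written. The embedding $C_p(X)\hookrightarrow C_p(M)$ is fine, but the step ``rule out a closed copy of $\NN^\NN$ sitting inside $X$ and conclude $\sigma$-compactness'' invokes a Hurewicz-type dichotomy that is valid for Polish (or analytic) spaces, not for arbitrary cosmic spaces, and you never actually derive from the resolution of $C_p(X)$ that $X$ contains no closed copy of $\NN^\NN$. The known route behind \cite[Corollary 9.1]{kak} is different: from a (functionally) bounded resolution one builds a $K$-analytic (or analytic, using the countable network) space $Z$ with $C_p(X)\subseteq Z\subseteq \RR^X$, and then Calbrix-type theorems on spaces $C_p(X)$ framed by analytic sets yield $\sigma$-compactness of $X$; your sketch replaces this machinery by an inapplicable dichotomy. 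The only part of your argument that is correct and complete is the final ``consequently'' step: since $\tau_p\le\tau_k$, a $\tau_k$-compact set is $\tau_p$-compact, hence functionally bounded in $C_p(X)$, so a compact resolution of $\CC(X)$ is a functionally bounded resolution of $C_p(X)$.
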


\begin{proposition} \label{p:Free-angelic-Lin}
Let $X$ be a paracompact first countable space such that $\CC(X)$ is an angelic space. Then $X$ is Lindel\"{o}f.
\end{proposition}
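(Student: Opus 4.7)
The plan is to argue by contradiction: assume $X$ is not Lindel\"of and embed the Cantor cube $\{0,1\}^{\omega_1}$ as a compact subspace of $\CC(X)$. This will contradict angelicity, because any compact subset $K$ of an angelic space is Fr\'echet--Urysohn (every $A\subseteq K$ is relatively compact in the ambient space, so the second angelicity condition produces a sequence in $A$ converging to any $y\in\overline A$), whereas $\{0,1\}^{\omega_1}$ is not Fr\'echet--Urysohn: the constant function $\mathbf 1$ lies in the closure of the set of indicator functions of finite subsets of $\omega_1$, but no such sequence can converge pointwise to $\mathbf 1$, since the union of countably many finite subsets of $\omega_1$ is countable.

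First I will extract an uncountable closed discrete set $D\subseteq X$. Taking an open cover $\UU$ of $X$ without a countable subcover and a locally finite open refinement $\mathcal V$ (possible by paracompactness), one has $|\mathcal V|>\aleph_0$. Picking a representative point $x_V\in V$ for each $V\in\mathcal V$ yields a locally finite, hence closed discrete, set $E\subseteq X$, and the local finiteness of $\mathcal V$ forces $|\mathcal V|\le\aleph_0\cdot|E|$, so $|E|\ge\aleph_1$; set $D:=E$. Since $X$ is paracompact Hausdorff and therefore collectionwise normal, the discrete family of closed singletons $\{\{d\}:d\in D\}$ admits a pairwise disjoint open expansion $\{U_d\}_{d\in D}$ with $d\in U_d$. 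Applying paracompactness to the open cover $\{U_d:d\in D\}\cup\{X\setminus D\}$ and passing to a locally finite refinement, I obtain a locally finite open family $\{V_d\}_{d\in D}$ with $d\in V_d\subseteq U_d$.

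For each $d\in D$, Urysohn's lemma furnishes a continuous $f_d:X\to[0,1]$ with $f_d(d)=1$ and $f_d\equiv0$ off $V_d$. The local finiteness of $\{\overline{V_d}\}_{d\in D}$ guarantees that $g_E:=\sum_{d\in E}f_d$ is a well-defined continuous function on $X$ for every $E\subseteq D$, since the sum is locally finite. Define $\phi:\{0,1\}^D\to\CC(X)$ by $\phi(E)=g_E$. Injectivity is immediate because $g_E(d)=\chi_E(d)$ for $d\in D$. For continuity into $\CC(X)$, observe that each compact $K\subseteq X$ meets only finitely many members of the locally finite family $\{\overline{V_d}\}$, so $g_E|_K$ depends on only finitely many coordinates of $E\in\{0,1\}^D$; hence $E\mapsto g_E|_K$ is continuous into $C(K)$ with the supremum norm for every compact $K\subseteq X$.

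Because $\{0,1\}^D$ is compact and $\phi$ is a continuous injection into the Hausdorff space $\CC(X)$, the image $\phi(\{0,1\}^D)$ is compact and homeomorphic to $\{0,1\}^{|D|}$, which contains $\{0,1\}^{\omega_1}$ as a closed subspace (fix all but $\omega_1$ coordinates to $0$). Hence this compact subspace of $\CC(X)$ fails to be Fr\'echet--Urysohn, contradicting the angelicity of $\CC(X)$. The main obstacle will be the refinement step that produces the locally finite family $\{V_d\}_{d\in D}$---combining collectionwise normality with paracompactness to obtain the locally finite disjoint open expansion that makes the sums $g_E$ continuous requires some care---while the embedding construction and the non-Fr\'echet--Urysohn property of $\{0,1\}^{\omega_1}$ are then standard. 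Notably, the first countability assumption on $X$ is not explicitly used in this approach.
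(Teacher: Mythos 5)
Your argument is correct, but it follows a genuinely different route from the paper. The paper's proof is a two-line reduction: by Lemma 1 of Pol (1974), a paracompact, \emph{first countable}, non-Lindel\"of space $X$ yields a closed copy of $\NN^{\w_1}$ inside $\CC(X)$, and since $\NN^{\w_1}$ is not angelic and angelicity is inherited by subspaces, $\CC(X)$ cannot be angelic; the first countability hypothesis is exactly what Pol's lemma consumes. You instead construct the obstruction by hand: from non-Lindel\"ofness plus paracompactness you extract an uncountable closed discrete set $D$, use collectionwise normality and a locally finite disjoint open expansion to build a locally finite family of Urysohn bumps, and obtain a topological embedding of the compact cube $\{0,1\}^{D}$ into $\CC(X)$ (continuity via the fact that a compact set meets only finitely many members of a locally finite family), then contradict angelicity through the standard fact that compact subsets of angelic spaces are Fr\'echet--Urysohn while $\{0,1\}^{\w_1}$ is not. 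All the individual steps check out (the pairwise disjointness of the expansion gives injectivity, the refinement step does produce a locally finite family indexed injectively by $D$, and the failure of the Fr\'echet--Urysohn property in $\{0,1\}^{\w_1}$ via indicators of finite sets is a ZFC argument). What each approach buys: the paper's proof is shorter because it outsources the construction to Pol's lemma; yours is self-contained and, as you observe, never uses first countability, so it actually proves the formally stronger statement that $\CC(X)$ fails to be angelic for \emph{every} paracompact non-Lindel\"of space. One small simplification available to you: in a paracompact Hausdorff space the discrete family $\{\{d\}:d\in D\}$ already admits a \emph{discrete} (hence locally finite and disjoint) open expansion, so the extra refinement against $\{U_d\}\cup\{X\setminus D\}$ can be skipped.
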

\begin{proof}
If $X$ is not Lindel\"{o}f, the space $\CC(X)$ contains  a closed subset $A$ homeomorphic to $\NN^{\w_1}$ by  Lemma 1 of \cite{Pol-1974}. But the space $\NN^{\w_1}$ is not angelic, so $\CC(X)$ is also not angelic. This contradiction shows that $X$ must be  Lindel\"{o}f.
\end{proof}

This yields the following
\begin{corollary} \label{c:Free-L(X)-metr}
Let $X$ be a metrizable space. If $\CC(X)$ has a functionally bounded resolution, then $X$ is $\sigma$-compact.
\end{corollary}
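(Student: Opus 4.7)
The plan is to reduce the conclusion to Fact~\ref{f:Free-G-base-cosmic}, which requires $X$ to be cosmic. So the first task is to show that a metrizable $X$ for which $\CC(X)$ has a functionally bounded resolution must be cosmic; the natural route is through Proposition~\ref{p:Free-angelic-Lin}. Since a metric space is paracompact and first countable, that proposition applies as soon as we know $\CC(X)$ is angelic. For a metrizable $X$ this is a classical fact: metric spaces belong to the Cascales--Orihuela class $\GG$, so $\CC(X)$ is angelic. Once angelicity is in hand, Proposition~\ref{p:Free-angelic-Lin} yields that $X$ is Lindel\"of, and a Lindel\"of metrizable space is separable, hence cosmic.

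Next I would transfer the functionally bounded resolution from $\CC(X)$ to $C_p(X)$. Since $\tau_p\subseteq\tau_k$, the identity map $\CC(X)\to C_p(X)$ is continuous; therefore every $\varphi\in C(C_p(X))$ is also continuous on $\CC(X)$, so it is bounded on each member of the given resolution. Thus the same family $\{A_\alpha:\alpha\in\NN^\NN\}$ is a functionally bounded resolution of $C_p(X)$.

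Finally, applying Fact~\ref{f:Free-G-base-cosmic} to the cosmic space $X$ with the resolution in $C_p(X)$ obtained above, we conclude that $X$ is $\sigma$-compact.

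The only potentially delicate point is ensuring angelicity of $\CC(X)$ for an arbitrary (not assumed separable) metrizable $X$; this is where one invokes the Cascales--Orihuela machinery for spaces in class $\GG$. Everything else is an immediate assembly of tools stated earlier in the paper.
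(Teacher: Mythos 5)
Your argument follows the paper's route step for step: angelicity of $\CC(X)$, then Proposition \ref{p:Free-angelic-Lin} to conclude that $X$ is Lindel\"{o}f, hence separable and cosmic, and finally Fact \ref{f:Free-G-base-cosmic} after observing that the given resolution is also a functionally bounded resolution of $C_p(X)$ (this transfer, which the paper leaves implicit, you argue correctly). The gap is exactly at the point you yourself flag as delicate. It is not true that $\CC(X)$ is angelic for every metrizable $X$: take $X$ uncountable and discrete (hence metrizable); then $\CC(X)=C_p(X)=\RR^{X}$, and the set $A$ of characteristic functions of finite subsets of $X$ is relatively compact, the constant function $1$ lies in $\overline{A}$, yet no sequence from $A$ converges to it (any such sequence is supported on a countable set of coordinates), so $\RR^{X}$ is not angelic. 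Your justification also misuses the Cascales--Orihuela machinery: the class $\GG$ is a class of \emph{locally convex spaces}, so ``metric spaces belong to class $\GG$'' does not parse, and the intended reading that $\CC(X)$ lies in class $\GG$ for every metric $X$ is false as well, since precompact sets in spaces of class $\GG$ are metrizable while $\RR^{X}$ above contains the nonmetrizable compact cube $[0,1]^{X}$.

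Because of this example, metrizability alone can never yield the angelicity you need; the hypothesis that $\CC(X)$ has a functionally bounded resolution has to enter already at that step. This is how the paper proceeds: it first obtains angelicity of $C_p(X)$ by citing Proposition 9.6 of \cite{kak} (a result applied here to the metrizable $X$ under the standing hypotheses, with the resolution available in $C_p(X)$ exactly as in your transfer paragraph), and then passes from $C_p(X)$ to the finer compact-open topology by Floret's angelic lemma \cite[Theorem, p.~31]{Floret}. If you replace your unconditional ``classical fact'' by this two-step derivation, the rest of your proof --- Proposition \ref{p:Free-angelic-Lin}, Lindel\"{o}f plus metrizable gives separable hence cosmic, then Fact \ref{f:Free-G-base-cosmic} --- is correct and coincides with the paper's argument.
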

\begin{proof}
Proposition 9.6 of \cite{kak} implies that $C_p(X)$ is angelic. By  \cite[Theorem, page 31]{Floret}, the space $\CC(X)$ is also angelic. Now Proposition \ref{p:Free-angelic-Lin} implies that $X$ is Lindel\"{o}f. So being metrizable, the space $X$ is separable, and hence $X$ is a cosmic space. Thus $X$ is $\sigma$-compact by Fact \ref{f:Free-G-base-cosmic}.
\end{proof}

The next proposition completes Proposition \ref{p:Free-angelic-Lin}.
\begin{proposition} \label{p:Free-Cech-angelic-Lin}
Let $X$ be a paracompact \v{C}ech-complete  space. Then $\CC(X)$ is an angelic space if and only if $X$ is Lindel\"{o}f.
\end{proposition}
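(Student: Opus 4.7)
My plan is to reduce to the paracompact first-countable case already handled in Proposition~\ref{p:Free-angelic-Lin}, using the Frol\'{\i}k--Arhangel'ski\u{\i} structure theorem: every paracompact \v{C}ech-complete space $X$ admits a perfect continuous surjection $f\colon X\to M$ onto a completely metrizable space $M$. Since perfect maps preserve the Lindel\"{o}f property in both directions, and since a metric space is Lindel\"{o}f exactly when separable, one has $X$ Lindel\"{o}f iff $M$ Lindel\"{o}f iff $M$ Polish. With this dictionary in place, both directions become matters of transferring information between $\CC(X)$ and $\CC(M)$.

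For the direction $\CC(X)$ angelic $\Rightarrow$ $X$ Lindel\"{o}f, I would argue contrapositively. If $X$ is not Lindel\"{o}f, neither is $M$, so $M$ is a paracompact first countable non-Lindel\"{o}f (metric) space; Pol's lemma (Lemma~1 of \cite{Pol-1974}), already invoked in Proposition~\ref{p:Free-angelic-Lin}, delivers a closed copy of $\NN^{\w_1}$ inside $\CC(M)$. I would then push this copy into $\CC(X)$ via the pullback $f^{*}\colon\CC(M)\to\CC(X)$, $g\mapsto g\circ f$. Since $f$ is perfect, $\|f^{*}g\|_{K}=\|g\|_{f(K)}$ for every compact $K\subseteq X$, and conversely every compact $L\subseteq M$ equals $f(f^{-1}(L))$ with $f^{-1}(L)$ compact; hence $f^{*}$ is a topological embedding. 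Its image consists of those $h\in\CC(X)$ that are constant on each fibre of $f$, which is a pointwise-closed condition, and because $f$ is a quotient map any such $h$ factors uniquely through a continuous function on $M$; consequently $f^{*}(\CC(M))$ is closed in $\CC(X)$. This exhibits a closed copy of $\NN^{\w_1}$ inside $\CC(X)$, contradicting angelicity exactly as in Proposition~\ref{p:Free-angelic-Lin}.

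For the converse, assume $X$ is Lindel\"{o}f, so $M$ is Polish and in particular $K$-analytic; because $K$-analyticity is preserved by perfect preimages, $X$ itself is $K$-analytic. Orihuela's theorem (see \cite[Chapter~9]{kak}) then gives that $C_{p}(X)$ is angelic. Every paracompact \v{C}ech-complete space is a $k$-space, so Floret's theorem (\cite[p.~31]{Floret}, the same result used in the proof of Corollary~\ref{c:Free-L(X)-metr}) upgrades angelicity from the pointwise to the compact-open topology, yielding that $\CC(X)$ is angelic.

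The principal obstacle is the verification that $f^{*}$ is a closed topological embedding: every feature of a perfect map is used here --- properness to match sup-norms in both directions, closedness/quotient-ness to transfer continuity from $X$ back to $M$, and compactness of fibres to identify the image with the fibre-constant continuous functions on $X$. Once this step is in hand, Pol's lemma on the metric quotient $M$ finishes the forward direction, and the Orihuela--Floret chain finishes the converse.
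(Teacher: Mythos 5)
Your proof is correct and follows essentially the same route as the paper: Frol\'{\i}k's perfect map onto a completely metrizable space, transfer of (non-)Lindel\"{o}fness along the perfect map, Pol's lemma transported through the embedding $\CC(M)\hookrightarrow\CC(X)$ for necessity, and the Orihuela--Floret chain for sufficiency. The paper merely shortcuts your pushforward step by noting that $\CC(M)$ embeds into $\CC(X)$ and that angelicity is hereditary, so Proposition \ref{p:Free-angelic-Lin} can be applied to $M$ directly (making your verification that $f^{*}$ has closed image unnecessary), and it reaches Orihuela's theorem via a compact resolution swallowing compact sets rather than via $K$-analyticity of $X$.
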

\begin{proof}
Assume that $\CC(X)$ is angelic. By a result of Frol\'{i}k \cite[5.5.9]{Eng}, there is a perfect map $f$ from $X$ onto a complete metrizable space $Y$. Suppose that $X$ is not  Lindel\"{o}f. Then, since $f$ is perfect, $Y$ is also not  Lindel\"{o}f by \cite[Theorem 3.8.9]{Eng}.
As $f^{-1}(K)$ is compact for every compact set $K\subseteq Y$ by \cite[Theorem 3.7.2]{Eng}, the space $\CC(Y)$ embeds into $\CC(X)$, and hence $\CC(Y)$ is also angelic. Now Proposition \ref{p:Free-angelic-Lin} implies that $Y$ is  Lindel\"{o}f, a contradiction. Thus $X$ is a  Lindel\"{o}f space.

Conversely, let $X$ be  Lindel\"{o}f. Then $X$ has a compact resolution swallowing compact sets, see the proof of Proposition 4.7 in \cite{GKKLP}. So $C_p(X)$ is angelic by Example 4.1 and Theorem 4.5 of \cite{kak}. Therefore $\CC(X)$ is angelic by  \cite[Theorem, page 31]{Floret}.
\end{proof}

Recall that, for a  space $X$, the family of sets of the form
\[
[K,\e] :=\{ f\in C(X): f(K)\subset (-\e,\e)\}
\]
where $K$ is a compact subset of $X$, is a base of the compact-open topology $\tau_k$ on $C(X)$.
Denote by $\delta: X\to \CC(\CC(X))$ the canonical map defined by
\[
\delta(x)(f):= f(x), \quad  \forall x\in X, \; \forall f\in C(X).
\]

\begin{proposition} \label{p:Free-G-base-ascoli}
Let $X$ be an Ascoli space. If $\CC(X)$ has a compact resolution swallowing  compact sets, then $X$ has an Ascoli uniformity $\UU$ with a $\GG$-base.
\end{proposition}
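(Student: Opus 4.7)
My plan is to build the uniformity directly from the compact resolution $\{K_\alpha:\alpha\in\NN^\NN\}$ of $\CC(X)$. For each compact $K\subseteq\CC(X)$ and $\eps>0$ set
$$\Omega(K,\eps):=\{(x,y)\in X\times X:|f(x)-f(y)|<\eps \text{ for every }f\in K\},$$
and define the $\NN^\NN$-indexed family $U_\alpha:=\Omega(K_\alpha,1/\alpha(1))$. Since $\alpha\le\beta$ forces $K_\alpha\subseteq K_\beta$ and $1/\beta(1)\le 1/\alpha(1)$, we have $U_\beta\subseteq U_\alpha$ automatically.

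Next I would verify that $\{U_\alpha:\alpha\in\NN^\NN\}$ is a base for a uniformity $\UU$ on $X$. Each $U_\alpha$ is symmetric and contains the diagonal; the filter condition is $U_{\max(\alpha,\beta)}\subseteq U_\alpha\cap U_\beta$; and the key transitivity condition $U_{2\alpha}\circ U_{2\alpha}\subseteq U_\alpha$ follows from the triangle inequality together with $K_\alpha\subseteq K_{2\alpha}$ and $1/(2\alpha(1))+1/(2\alpha(1))=1/\alpha(1)$. So $\UU$ is a genuine uniformity, and the family $\{U_\alpha\}$ is visibly a $\GG$-base for it.

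To show $\UU$ is admissible I use both hypotheses. For one direction, fix $\alpha$ and $x\in X$: because $X$ is Ascoli the compact set $K_\alpha\subseteq\CC(X)$ is evenly continuous, so the evaluation $K_\alpha\times X\to\RR$ is continuous, and the tube lemma applied to the open set $\{(f,y):|f(y)-f(x)|<1/\alpha(1)\}\supseteq K_\alpha\times\{x\}$ produces an open neighborhood of $x$ contained in $U_\alpha[x]$. For the other direction, given an open neighborhood $W$ of $x$, use Tychonoff separation to obtain $f\in C(X)$ with $f(x)=0$ and $f\equiv 1$ on $X\setminus W$; since $\{f\}$ is compact in $\CC(X)$, the swallowing property yields some $\alpha_0$ with $f\in K_{\alpha_0}$, and then any $\alpha\ge\alpha_0$ with $\alpha(1)\ge 2$ satisfies $U_\alpha[x]\subseteq\Omega(\{f\},1/2)[x]\subseteq W$.

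Finally, the Ascoli condition on $\UU$ is immediate from swallowing: given any compact $K\subseteq\CC(X)$ and any $\eps>0$, pick $\alpha_0$ with $K\subseteq K_{\alpha_0}$ and then any $\alpha\ge\alpha_0$ with $\alpha(1)\ge\lceil 1/\eps\rceil$, so that $U_\alpha\subseteq\Omega(K,\eps)$. I expect the main obstacle to be the admissibility check — in particular turning even continuity of a compact set of functions into a genuine neighborhood-of-point statement (where the tube lemma is essential), and combining Tychonoff separation with swallowing of singleton compacta to recover an arbitrary prescribed neighborhood of $x$. Everything else is a monotonicity and triangle-inequality bookkeeping argument.
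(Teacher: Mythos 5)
Your argument is correct, and in fact the entourages you build are exactly the ones in the paper: the paper's proof takes the $\GG$-base $V_\alpha=\big[K_\alpha,\alpha(1)^{-1}\big]$ of $\CC\big(\CC(X)\big)$ (citing the Ferrando--K{\c{a}}kol result) and the canonical map $\delta:X\to \CC\big(\CC(X)\big)$, which is an embedding because $X$ is Ascoli (citing Corollary 5.8 of \cite{BG}), and then simply declares $\UU$ to be the uniformity induced from $\CC\big(\CC(X)\big)$, with $U_\alpha=\{(x,y):\delta(x)-\delta(y)\in V_\alpha\}$ --- which unwinds to your $\Omega(K_\alpha,1/\alpha(1))$. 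So admissibility and the $\GG$-base property come for free from the embedding and the $\GG$-base of $\CC\big(\CC(X)\big)$, and only the uniform equicontinuity (Ascoli) condition is checked by hand, exactly as in your last paragraph. What you do differently is to avoid both citations: you verify directly that the $U_\alpha$ form a base of a uniformity (monotonicity, filter condition, and $U_{2\alpha}\circ U_{2\alpha}\subseteq U_\alpha$ via the triangle inequality), and you prove admissibility by hand --- even continuity of the compact set $K_\alpha$ gives joint continuity of evaluation, and the tube lemma turns this into $U_\alpha[x]$ being a neighborhood of $x$, while complete regularity plus swallowing of the singleton $\{f\}$ recovers any prescribed neighborhood of $x$. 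This is essentially a self-contained re-proof of the relevant part of the embedding theorem for Ascoli spaces, so your route is longer but more elementary and makes transparent exactly where the Ascoli hypothesis and the swallowing hypothesis enter; the paper's route is shorter at the cost of invoking two external results.
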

\begin{proof}
Let $\KK :=\{ K_\alpha: \alpha\in\NN^\NN\}$ be a compact resolution swallowing  compact sets of $\CC(X)$. Then, by \cite{feka}, the space $\CC\big( \CC(X)\big)$ has a $\GG$-base $\{ V_\alpha: \alpha\in\NN^\NN\}$, where $V_\alpha := \big[K_\alpha, \alpha(1)^{-1}\big]$.
Since $X$ is Ascoli space, the canonical map $\delta : X\to \CC\big( \CC(X)\big)$ is an embedding by Corollary 5.8 of \cite{BG}. For every $\alpha\in\NN^\NN$, define
\[
U_\alpha :=\{ (x,y)\in X\times X: \; \delta(x)-\delta(y) \in V_\alpha\},
\]
and let $\UU$ be the uniformity on $X$ induced from $\CC\big( \CC(X)\big)$. Clearly, $\{ U_\alpha: \alpha\in\NN^\NN\}$ is a $\GG$-base for $\UU$ and $\UU$ is admissible. We show that $\UU$ is also Ascoli.

Fix a compact subset $K$ of $\CC(X)$ and $\e>0$. Take $\alpha\in \NN^\NN$ such that $K\subseteq K_\alpha$ and $\alpha(1) >1/\e$. Now for every $(x,y)\in U_\alpha$ and each $f\in K$, we obtain
\[
|f(x)-f(y)|=\big| \big( \delta(x)-\delta(y)\big)(f)\big|< \frac{1}{\alpha(1)} <\e,
\]
Thus $\UU$ is an Ascoli uniformity.
\end{proof}

We shall use the following encoding operation of elements of $\NN^\NN$. We encode each $\alpha\in\NN^\NN$ into a sequence $\{ \alpha_i\}_{i\in\w}$ of elements of $\NN^\NN$ as follows. Consider an arbitrary decomposition of $\NN$ onto a disjoint family $\{ N_i\}_{i\in\w}$ of infinite sets, where $N_i =\{ n_{k,i} \}_{k\in\NN}$. Now for $\alpha=\big(\alpha(n)\big)_{n\in\NN}$ and $i\in\w$, we set $\alpha_i=\big(\alpha_i(k)\big)_{k\in\NN}$, where $\alpha_i(k):= \alpha(n_{k,i})$ for every $k\in\NN$. Conversely, for every sequence $\{ \alpha_i\}_{i\in\w}$ of elements of $\NN^\NN$ we define $\alpha=\big(\alpha(n)\big)_{n\in\NN}$ setting $\alpha(n):= \alpha_i(k)$ if $n=n_{k,i}$.

For a subset $A$ of a set $S$, a subset $B$  of $S\times S$ and $(a,b)\in B$, we define
\[
\Delta_A:=\{ (a,a)\in S\times S: a\in A\} \mbox{ and } B(a):=\{ s\in S: (a,s)\in B\}.
\]

Next definition  generalizes the classical notion of spaces of pointwise countable type (due to Arhangel'skii) and also Definition \ref{def:Free-uniform-compact}.
\begin{definition}{\em
Let $I$ be an ordered set and $X$ a topological space. The space $X$ is a space of
\begin{enumerate}
\item[(i)] {\em $I$-compact type} if every compact subset $K$ of $X$ has an decreasing $I$-base $\{ U_i: i\in I\}$ of open neighborhoods, i.e. $U_i\subseteq U_j$ for all $i\geq j$ and for every open neighborhood $U$ of $K$ there is $i\in I$ such that $K\subseteq U_i \subseteq U$;
\item[(ii)]  {\em $I$-pointwise countable type} if for every $x$ in $X$ there exists a compact set $K$ which has  a decreasing $I$-base of open sets;
\item[(iii)]  {\em $I$-uniformly compact type} if for every compact subset $K$ of $X$ the set $\Delta_K$ has an $I$-decreasing base $\{ U_i: i\in I\}$ of open neighborhoods in $X\times X$, i.e. for every open neighborhood $U$ of $\Delta_K$ there is $i\in I$ such that $\Delta_K \subseteq U_i \subseteq U$.
\end{enumerate} }
\end{definition}
As usual a decreasing $\NN^\NN$-base of a subset $A$ of $X$ is called a {\em $\GG$-base} of $A$.
Next proposition provides possible  two cases when $X$ is of $\NN^\NN$-uniformly compact type.
\begin{proposition} \label{p:Free-LCS-G-base}
A Tychonoff space $X$ is  of $\NN^\NN$-uniformly compact type if one of the following conditions holds:
\begin{enumerate}
\item[(i)] $X$ is a metrizable space;
\item[(ii)] $X$ is a countable space such that every point $x\in X$ has a $\GG$-base.
\end{enumerate}
\end{proposition}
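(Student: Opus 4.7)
The plan is to treat the two cases separately, in each fixing a compact $K\subseteq X$ and building an $\NN^\NN$-decreasing base of open neighborhoods of $\Delta_K$ in $X\times X$.

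For (i), I would first observe that $X\times X$ is metrizable and $\Delta_K$ is compact in it, so $\Delta_K$ has a countable decreasing neighborhood base; a concrete candidate is
\[
V_n := \{(x,y)\in X\times X: d(x,K)<1/n,\ d(y,K)<1/n,\ d(x,y)<1/n\},\quad n\in\NN.
\]
The base property follows from a routine subsequence argument: if some open $U\supseteq\Delta_K$ contained no $V_n$, pick $(x_n,y_n)\in V_n\setminus U$, extract via compactness of $K$ a cluster point $z\in K$ of $\{x_n\}$, note that $\{y_n\}$ converges to the same $z$ along the subsequence, and contradict the openness of $U$ at $(z,z)$. Given such a countable base, I would define $U_\alpha:=V_{\alpha(1)}$; the required monotonicity is automatic, and every open $U\supseteq\Delta_K$ contains some $V_n$, hence $U_\alpha\subseteq U$ for $\alpha=(n,n,\ldots)$.

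For (ii), since $X$ is countable, so is $K$, and I enumerate $K=\{x_n:n\in\NN\}$ (allowing repetitions if $K$ is finite). For each $n$ I fix a $\GG$-base $\{W^n_\alpha:\alpha\in\NN^\NN\}$ of open neighborhoods of $x_n$ (any $\GG$-base becomes such after passing to interiors), and apply the encoding operation recalled in the paper to any $\alpha\in\NN^\NN$, producing a sequence $\{\alpha_n\}_{n\in\NN}$ of elements of $\NN^\NN$ whose entries can be independently prescribed. Setting
\[
U_\alpha := \bigcup_{n\in\NN} \bigl(W^n_{\alpha_n}\times W^n_{\alpha_n}\bigr),
\]
each $U_\alpha$ is open, contains every $(x_n,x_n)$, and hence covers $\Delta_K$. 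Monotonicity follows from $\alpha\leq\beta\Rightarrow \alpha_n\leq\beta_n\Rightarrow W^n_{\beta_n}\subseteq W^n_{\alpha_n}$. For the base property, given an open $U\supseteq\Delta_K$ I choose for each $n$ an open $O_n\ni x_n$ with $O_n\times O_n\subseteq U$, then $\gamma^n\in\NN^\NN$ with $W^n_{\gamma^n}\subseteq O_n$, and use the encoding to assemble an $\alpha\in\NN^\NN$ with $\alpha_n=\gamma^n$ for every $n$, giving $U_\alpha\subseteq U$.

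The only genuinely delicate point I anticipate is the bookkeeping in (ii), where one must be careful that the prescription $\alpha_n=\gamma^n$ is really compatible with the encoding; this is exactly its purpose, and it is handled by the explicit construction recalled just above the definition of $I$-compact type. Part (i) dissolves into the classical fact that every compact subset of a metric space admits a countable decreasing neighborhood base, together with the trivial passage from a countable base to a $\GG$-base via the first coordinate of $\alpha$.
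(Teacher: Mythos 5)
Your proposal is correct and follows essentially the same route as the paper: in (i) you pass from a countable decreasing neighborhood base of $\Delta_K$ in the metrizable square $X\times X$ to a $\GG$-base via $U_\alpha=V_{\alpha(1)}$ (you merely spell out the classical fact the paper cites), and in (ii) you use the same encoding of $\alpha$ into a sequence $(\alpha_n)$ and the same union of squares $W^n_{\alpha_n}\times W^n_{\alpha_n}$ over an enumeration of $K$. The only cosmetic difference is that you enumerate $K$ itself (with repetitions) instead of all of $X$ and so avoid the paper's separate treatment of finite $K$; this changes nothing of substance.
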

\begin{proof}
(i) For every compact subset $K$ of $X$, the compact subset $\Delta_K$ of the metrizable space $X\times X$ has a decreasing base $\{ V_n: n\in\NN\}$. Then the family $\{ U_\alpha = V_{\alpha(1)}: \alpha\in \NN^\NN\} $ is a $\GG$-base of $K$.

(ii) Let $\{ x_n: n\in\NN\}$ be an enumeration of $X$ and $\{ U_{\alpha,x_n}: \alpha\in\NN^\NN\}$ be a $\GG$-base at $x_n$. Fix a compact subset $K$ of $X$. If $K$ is finite, then clearly the family
\[
\left\{ \bigcup_{x_n\in K} U_{\alpha,x_n}\times U_{\alpha,x_n}: \alpha\in\NN^\NN\right\}
\]
is a $\GG$-base at $\Delta_K$.

Assume that $K$ is infinite. For every $\alpha\in\NN^\NN$ with the encoding $(\alpha_n)$, we define
\[
U_\alpha :=  \bigcup \{ U_{\alpha_n,x_n}\times U_{\alpha_n,x_n} : x_n\in K\}.
\]
We claim that the family $\UU= \{ U_\alpha : \alpha\in\NN^\NN\}$ is a $\GG$-base at $\Delta_K$. Indeed, fix an open neighborhood $U$ of $\Delta_K$. For every $x_n\in K$ take $\alpha_n\in\NN^\NN$ such that $U_{\alpha_n,x_n}\times U_{\alpha_n,x_n} \subseteq U$. Now if $\alpha\in\NN^\NN$ is built by the sequence $(\alpha_n)$, we obtain $K\subseteq U_\alpha \subseteq U$. Thus $\UU$ is a $\GG$-base at $\Delta_K$.
\end{proof}

We shall use the following fact which is proved in the ``if'' part of the Ascoli theorem \cite[3.4.20]{Eng}.
\begin{fact} \label{f:Ascoli-Free-Ck}
Let $X$ be a  space and $A$ be an evenly continuous (in particular, equicontinuous) pointwise bounded subset of $\CC(X)$. Then the closure ${\bar A}$ of $A$ in $\tau_k$ is a compact  equicontinuous subset of $\CC(X)$. % Moreover, $\tau_k|_{{\bar A}} =\tau_p|_{{\bar A}}$.
\end{fact}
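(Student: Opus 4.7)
The plan is to follow the standard pattern of the ``if'' direction of the Ascoli theorem: first enlarge $A$ to its closure in the pointwise topology $\tau_p$ inside $\RR^X$, show that on this closure the topologies $\tau_p$ and $\tau_k$ coincide, deduce that the $\tau_p$-closure already lies in $C(X)$ and is $\tau_k$-compact, and finally check that even continuity (resp. equicontinuity) is preserved under passage to the $\tau_p$-closure.

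First I would establish compactness in the pointwise topology. Since $A$ is pointwise bounded, for each $x\in X$ the set $I_x:=\cl_\RR\{f(x):f\in A\}$ is a compact interval in $\RR$, and $A\subseteq\prod_{x\in X}I_x$. By Tychonoff's theorem the product is compact in the product (i.e. pointwise) topology, so the $\tau_p$-closure $\widetilde A$ of $A$ inside $\RR^X$ is $\tau_p$-compact.

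The main technical step, and the place where even continuity does its work, is to show that $\widetilde A\subseteq C(X)$ and that the two topologies $\tau_p$ and $\tau_k$ agree on $\widetilde A$. Given a net $(f_\lambda)\subseteq A$ converging pointwise to some $f\in\widetilde A$, a compact set $K\subseteq X$, and $\e>0$, even continuity of $A$ provides, for each $x\in K$, a neighborhood $U_x$ of $x$ and a neighborhood $V_x$ of $f(x)$ in $\RR$ such that $g(x)\in V_x$ implies $g(U_x)\subseteq (f(x)-\e,f(x)+\e)$ for every $g\in A$. Covering $K$ by finitely many $U_{x_1},\dots,U_{x_n}$ and using pointwise convergence at the $x_i$, one obtains a tail of the net on which $f_\lambda\to f$ uniformly on $K$; in particular $f$ is continuous on $K$, and by applying this for every compact $K$ and using that $X$ is covered by its compact sets at the level of pointwise evaluation, I would conclude $f\in C(X)$ and $f_\lambda\to f$ in $\tau_k$. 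Hence $\tau_p=\tau_k$ on $\widetilde A$, so $\widetilde A$ is already $\tau_k$-closed in $C(X)$, which forces $\widetilde A=\overline A$ (the closure in $\CC(X)$) and shows $\overline A$ is $\tau_k$-compact.

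For the last assertion, I would verify that the limit family $\overline A$ is equicontinuous. If $A$ is equicontinuous, then for every $x\in X$ and $\e>0$ there is an open neighborhood $W$ of $x$ with $|f(y)-f(x)|\le \e$ for all $y\in W$ and all $f\in A$; since this is a closed inequality, it passes to pointwise limits and hence to every $f\in\overline A=\widetilde A$, giving equicontinuity of $\overline A$. The main obstacle I anticipate is the coincidence-of-topologies step: one must be careful because even continuity, unlike equicontinuity, only bounds $g(U_x)$ in terms of the value $g(x)$ and is therefore not obviously inherited by pointwise limits before one knows that the limit is continuous; the fix is to use even continuity only on the original family $A$ along the converging net, as sketched above, rather than on $\widetilde A$ directly.
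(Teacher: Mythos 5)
Your overall plan (pass to the pointwise closure $\widetilde A$ in $\RR^X$, get $\tau_p$-compactness from Tychonoff, then transfer compactness to $\tau_k$) is the standard proof of the ``if'' part of Ascoli's theorem, which is exactly what the paper invokes (it cites Engelking 3.4.20 rather than proving the Fact). However, as written your argument has two genuine gaps. First, the step ``$f$ is continuous on every compact $K$, hence $f\in C(X)$'' fails: the Fact is stated for an arbitrary Tychonoff space $X$, and continuity of $f$ on each compact subspace yields continuity on $X$ only for $k$-spaces (more precisely $k_\RR$-spaces); the phrase ``$X$ is covered by its compact sets at the level of pointwise evaluation'' is not an argument, and there are non-discrete spaces all of whose compact subsets are finite, where every function is continuous on compacta. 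The correct repair is to apply even continuity at a single point: for $x\in X$ and $\e>0$ choose, for the point $f(x)$, neighborhoods $U\ni x$ and $W\ni f(x)$ with $g(x)\in W\Rightarrow g(U)\subseteq (f(x)-\e,f(x)+\e)$ for all $g\in A$; since $f_\lambda(x)\to f(x)$, eventually $f_\lambda(U)$ lies in this interval, and passing to the pointwise limit gives $|f(u)-f(x)|\le\e$ on $U$, so $f$ is continuous at $x$ with no assumption on $X$.

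Second, neither ``$\tau_p=\tau_k$ on $\widetilde A$'' nor the equicontinuity of $\overline A$ is actually established under the stated hypothesis. Your net argument treats only nets lying in $A$, while coincidence of the two topologies on $\widetilde A$ (equivalently, continuity of the identity from the compact space $(\widetilde A,\tau_p)$ onto the Hausdorff space $(\widetilde A,\tau_k)$) requires controlling nets in $\widetilde A$ itself, i.e.\ knowing that $\widetilde A$ is evenly continuous or equicontinuous; and your last paragraph proves equicontinuity of $\overline A$ only under the additional assumption that $A$ is equicontinuous, whereas the Fact asserts it for every evenly continuous pointwise bounded $A$. Both gaps are closed by one missing lemma: an evenly continuous, pointwise bounded family of real-valued functions is automatically equicontinuous at each point. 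Indeed, fix $x$ and $\e>0$; the set $\overline{\{f(x):f\in A\}}$ is compact in $\RR$, so it is covered by finitely many of the neighborhoods $W_{y_1},\dots,W_{y_n}$ furnished by even continuity at $x$ for the targets $y_i$ and the intervals $(y_i-\e/2,y_i+\e/2)$; on $U=\bigcap_i U_{y_i}$ every $f\in A$ satisfies $|f(u)-f(x)|<\e$. With this lemma, $A$ is equicontinuous, its pointwise closure is equicontinuous (closed inequalities pass to pointwise limits, as you note), $\tau_p$ and $\tau_k$ coincide on equicontinuous pointwise bounded sets, and the remainder of your argument goes through and yields the full conclusion of the Fact.
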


If an  Ascoli space $X$ is additionally $\sigma$-compact, we can reverse Proposition \ref{p:Free-G-base-ascoli}.
\begin{proposition} \label{p:Free-G-base-uniform}
Let $X$ be a $\sigma$-compact space.  Then the space $\CC(X)$ has a compact resolution swallowing  compact sets if one of the following conditions holds:
\begin{enumerate}
\item[(i)] $X$ has an Ascoli uniformity $\UU$ with a $\GG$-base;
\item[(ii)] $X$ is an Ascoli space of $\NN^\NN$-uniformly compact type.
\end{enumerate}
\end{proposition}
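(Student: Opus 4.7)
The plan is to exhibit an explicit compact resolution of $\CC(X)$ tailored to the $\GG$-base structure provided by each hypothesis. Since $X$ is $\sigma$-compact, fix throughout an increasing sequence $Q_1\subseteq Q_2\subseteq\cdots$ of compact subsets of $X$ with $X=\bigcup_{n\in\NN}Q_n$. In both cases I will define $K_\alpha\subseteq C(X)$ by imposing on $f$ (a) a sup-norm bound on each $Q_n$ controlled by $\alpha$, and (b) an oscillation bound $|f(x)-f(y)|\leq 1/k$ on a prescribed open neighborhood of $\Delta_{Q_n}$ in $X\times X$; the encoding $\alpha\mapsto(\alpha_i)_{i\in\omega}$ introduced before Proposition \ref{p:Free-LCS-G-base} (iterated once to obtain a doubly-indexed family in case (ii)) is used to package all parameters into a single $\alpha\in\NN^\NN$.

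For case (i), let $\{U_\beta:\beta\in\NN^\NN\}$ be the $\GG$-base of $\UU$ and set
\[
K_\alpha:=\bigl\{f\in C(X):\ \sup\nolimits_{Q_n}|f|\leq\alpha_0(n)\ \forall n,\ \sup\nolimits_{(x,y)\in U_{\alpha_k}}|f(x)-f(y)|\leq 1/k\ \forall k\geq 1\bigr\}.
\]
Each $K_\alpha$ is closed in $\CC(X)$ (pointwise limits preserve both inequalities), pointwise bounded, and equicontinuous at every point (since $\UU$ is admissible, the slice $U_{\alpha_k}(x_0)$ is a neighborhood of $x_0$), so Fact \ref{f:Ascoli-Free-Ck} yields that $K_\alpha$ is compact. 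Monotonicity $K_\alpha\subseteq K_\beta$ for $\alpha\leq\beta$ follows from $U_{\beta_k}\subseteq U_{\alpha_k}$ and $\alpha_0(n)\leq\beta_0(n)$. To swallow a given compact $K\subseteq\CC(X)$, set $\alpha_0(n):=\lceil\sup_{f\in K}\sup_{Q_n}|f|\rceil$; for each $k\geq 1$ the Ascoli hypothesis on $\UU$ provides $U\in\UU$ with $|f(x)-f(y)|\leq 1/k$ on $U$ for all $f\in K$, and the $\GG$-base supplies $\alpha_k\in\NN^\NN$ with $U_{\alpha_k}\subseteq U$. Assembling these data into a single $\alpha$ via the encoding gives $K\subseteq K_\alpha$, which in particular proves the covering property.

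For case (ii), for each $n$ fix a $\GG$-base $\{V^{(n)}_\beta:\beta\in\NN^\NN\}$ of $\Delta_{Q_n}$ in $X\times X$ and define, via the iterated encoding $\alpha\mapsto(\alpha_0,(\alpha_{n,k})_{n,k\geq 1})$,
\[
K_\alpha:=\bigl\{f\in C(X):\ \sup\nolimits_{Q_n}|f|\leq\alpha_0(n),\ \sup\nolimits_{(x,y)\in V^{(n)}_{\alpha_{n,k}}}|f(x)-f(y)|\leq 1/k\ \forall n,k\geq 1\bigr\}.
\]
Closedness, pointwise boundedness, and monotonicity go through exactly as in (i); equicontinuity of $K_\alpha$ at any $x_0\in Q_n$ holds because for $\varepsilon>0$ and $k>1/\varepsilon$ the slice $V^{(n)}_{\alpha_{n,k}}(x_0)$ is an open neighborhood of $x_0$ on which each $f\in K_\alpha$ varies by at most $1/k$. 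Hence $K_\alpha$ is compact by Fact \ref{f:Ascoli-Free-Ck}. The swallowing step is the delicate one: given a compact $K\subseteq\CC(X)$ and indices $n,k$, the Ascoli hypothesis on $X$ renders $K$ pointwise equicontinuous, so for each $x\in Q_n$ there is an open $U_x\ni x$ with $|f(y)-f(x)|<1/(2k)$ for all $f\in K$ and $y\in U_x$; compactness of $Q_n$ extracts a finite subcover $U_{x_1},\dots,U_{x_m}$, and $W:=\bigcup_i U_{x_i}\times U_{x_i}$ is then an open neighborhood of $\Delta_{Q_n}$ on which $\sup_{f\in K}|f(x)-f(y)|\leq 1/k$. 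The $\GG$-base of $\Delta_{Q_n}$ finally supplies $\alpha_{n,k}$ with $V^{(n)}_{\alpha_{n,k}}\subseteq W$, and the resulting $\alpha$ satisfies $K\subseteq K_\alpha$.

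The principal technical point I expect to wrestle with is the last step in case (ii), namely converting the Ascoli property of $X$ into an open neighborhood of $\Delta_{Q_n}$ carrying a uniform oscillation bound for all $f\in K$, so that the $\GG$-base hypothesis on $\Delta_{Q_n}$ can be applied. In case (i) this step is replaced by the direct hypothesis that $\UU$ is Ascoli, which already furnishes such an entourage in $\UU$ itself; the remaining bookkeeping via the encoding is then routine.
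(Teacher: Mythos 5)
Your proposal is correct and follows essentially the same route as the paper: closed sets in $\CC(X)$ cut out by sup-bounds on the compact pieces and oscillation bounds on entourages/neighborhoods of the diagonals, compactness via Fact \ref{f:Ascoli-Free-Ck}, and swallowing via the Ascoli property of $\UU$ (case (i)) or of $X$ together with the $\GG$-base of $\Delta_{Q_n}$ (case (ii)). The only difference is cosmetic: in case (ii) the paper avoids your doubly-indexed family by requiring, for each $n$, oscillation at most $1/n$ on a single neighborhood of $\Delta_{C_n}$ and using that the $C_n$ increase, which simplifies the encoding bookkeeping.
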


\begin{proof}
Let $X=\bigcup_{n\in\NN} C_n$ be the union of an increasing sequence $\{ C_n\}_{n\in\NN}$ of compact subsets. For the case (i), let $\{ U_\alpha: \alpha\in\NN^\NN\}$ be a $\GG$-base for the Ascoli uniformity $\UU$. For the case (ii), for every $n\in \NN$, let $\{ U_{\alpha,n}: \alpha\in\NN^\NN\}$ be a $\GG$-base of $\Delta_{C_n}$.  For every $\alpha\in\NN$ with the encoding $\{ \alpha_k\}_{k\in\w}$, we define
\[
\begin{split}
A_\alpha & :=\bigcap_{k\in\NN}  \{ f\in C(X): \, |f(x)|\leq \alpha_0(k) \quad \forall x\in C_k \}, \\
B_\alpha & := \bigcap_{n\in\NN} \left\{ f\in C(X): \, |f(x)-f(y)|\leq\frac{1}{n} \quad \forall (x,y)\in U_{\alpha_n}\right\}, \; \mbox{ for case (i)}, \\
B_\alpha & := \bigcap_{n\in\NN} \left\{ f\in C(X): \, |f(x)-f(y)|\leq\frac{1}{n} \quad \forall (x,y)\in U_{\alpha_n,n}\right\}, \;  \mbox{ for case (ii)},
\end{split}
\]
and set $K_\alpha := A_\alpha \cap B_\alpha$. Clearly, $K_\alpha$ is closed in the compact-open topology $\tau_k$ and $K_\alpha \subseteq K_\beta$ for every $\alpha\leq\beta$. Fix $\alpha\in\NN^\NN$. By construction,  $K_\alpha$ is pointwise bounded.
We check that the set $K_\alpha$ is equicontinuous. We distinguish between cases (i) and (ii).

{\em Case (i).}  Given $\e >0$ take $n\in\NN$ such that $n> 1/\e$. Then for every $f\in K_\alpha$, by the definition of $B_\alpha$, we obtain $|f(x)-f(y)|\leq\frac{1}{n} <\e$ whenever $(x,y)\in U_{\alpha_n}$. So $K_\alpha$ is equicontinuous.

{\em Case (ii).}  Fix $x\in X$, so $x\in C_l$ for some $l\in\NN$. Given $\e >0$ take $n>l$ such that $n> 1/\e$. Then for every $f\in K_\alpha$, by the definition of $B_\alpha$, we obtain $|f(x)-f(y)|\leq\frac{1}{n} <\e$ whenever $(x,y)\in U_{\alpha_n,n}$. Since $U_{\alpha_n,n}(x)$ is an open neighborhood of $x$, the set $K_\alpha$ is equicontinuous.

Now in both cases (i) and (ii), Fact \ref{f:Ascoli-Free-Ck}  implies that $K_\alpha$ is a compact subset of $\CC(X)$.

Let us show that the family $\KK :=\{ K_\alpha: \alpha\in\NN^\NN\}$ swallows the compact sets of $\CC(X)$. Fix a compact subset  $K$ of $\CC(X)$. Since
$X$ is an Ascoli space, $K$ is pointwise bounded and equicontinuous. Define $\alpha_0 =\big(\alpha_0(k)\big)_{k\in\NN}$ as follows: for every $k\in\NN$, set
\[
\alpha_0(k):= \left[ \sup\{ |f(x)|: x\in C_k , f\in K\} \right] +1,
\]
where $[t]$ is the integral part of a real number $t$. Again  we distinguish between cases (i) and (ii).

{\em Case (i).}  Since $\UU$ is Ascoli, for every $n\in\NN$, take $\alpha_n\in\NN^\NN$ such that $|f(x)-f(y)|\leq 1/n$ for every $f\in K$ and each $(x,y)\in U_{\alpha_n}$. If $\alpha\in\NN^\NN$ is built by the above procedure we obtain $K\subseteq K_\alpha$.

{\em Case (ii).}  Fix $n\in\NN$. For every $x\in C_n$ take an open neighborhood $U_x$ of $x$ such that $|f(x)-f(y)|\leq 1/2n$ for every $f\in K$ and each $y\in U_x$. Set $W:=\bigcup_{x\in C_n} U_x\times U_x$. Then for every $(z,y)\in W$ there is $x\in C_n$ such that $(z,y)\in U_x\times U_x$ and hence
\[
|f(z)-f(y)|\leq |f(x)-f(z)|+|f(x)-f(y)|\leq \frac{1}{n}, \; \mbox{ for every } f\in K.
\]
Since $X$ is  of $\NN^\NN$-uniformly compact type, we choose $\alpha_n\in\NN^\NN$ such that $\Delta_{C_n} \subseteq U_{\alpha_n,n}\subseteq W$.
If $\alpha\in\NN^\NN$ is built by the sequence $(\alpha_n)$, we obtain $K\subseteq K_\alpha$.

Also now in both cases (i) and (ii) the family $\KK$  swallows the compact sets of $\CC(X)$.
\end{proof}

As a corollary we obtain the following strengthening of Christensen's theorem.
\begin{corollary} \label{c:Free-resol-metric}
For a metrizable space $X$, $\CC(X)$ has a compact resolution swallowing the compact sets of $\CC(X)$ if and only if $X$ is $\sigma$-compact.
\end{corollary}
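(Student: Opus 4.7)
The plan is to deduce this corollary directly from the machinery developed above, so the proof reduces to invoking the right propositions in each direction.

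For the forward direction, suppose $\CC(X)$ has a compact resolution swallowing compact sets. In particular, $\CC(X)$ has a compact resolution, which is trivially a functionally bounded resolution. Since $X$ is metrizable, Corollary \ref{c:Free-L(X)-metr} immediately yields that $X$ is $\sigma$-compact. So this direction is essentially free.

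For the converse, assume $X$ is metrizable and $\sigma$-compact. First I would observe that every metrizable space is first-countable, hence a $k$-space, hence an Ascoli space, so $X$ is Ascoli. Next, by Proposition \ref{p:Free-LCS-G-base}(i), every metrizable space is of $\NN^\NN$-uniformly compact type. Therefore $X$ satisfies the hypotheses of Proposition \ref{p:Free-G-base-uniform}(ii): it is $\sigma$-compact, Ascoli, and of $\NN^\NN$-uniformly compact type. Applying that proposition gives a compact resolution of $\CC(X)$ which swallows the compact subsets of $\CC(X)$.

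Since both directions are direct citations of results already proved in Section \ref{sec:2}, there is no genuine obstacle here; the corollary is really a packaging of Corollary \ref{c:Free-L(X)-metr} and Propositions \ref{p:Free-LCS-G-base}(i) and \ref{p:Free-G-base-uniform}(ii). The only thing one should be careful about is to note explicitly that metrizability implies Ascoli (via first countability) so that Proposition \ref{p:Free-G-base-uniform}(ii) applies, and that the ``swallowing'' conclusion of the converse direction is genuinely stronger than Christensen's original statement, which is the content of this strengthening.
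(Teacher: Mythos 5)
Your proof is correct and is essentially the paper's own argument: the forward direction is Corollary \ref{c:Free-L(X)-metr} (a compact resolution being in particular functionally bounded), and the converse combines Proposition \ref{p:Free-LCS-G-base}(i) with Proposition \ref{p:Free-G-base-uniform}(ii), the Ascoli property of $X$ following from metrizability. No gaps; your extra remarks only make explicit what the paper leaves implicit.
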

\begin{proof}
If $\CC(X)$ has a compact resolution swallowing the compact sets of $\CC(X)$, then $X$ is $\sigma$-compact by Corollary \ref{c:Free-L(X)-metr}. The converse assertion follows from Propositions \ref{p:Free-LCS-G-base} and \ref{p:Free-G-base-uniform}.
\end{proof}
In particular, the space $\CC(\mathbb{Q})$ has a compact resolution swallowing its compact sets.

%%%%%%%%%%%%%%%%%%%%%%%%%%%
%%%%%%%%%%%%%%%%%%%%%%%%%%%
%%%%%%%%%%%%%%%%%%%%%%%%%%%
%%%%%%%%%%%%%%%%%%%%%%%%%%%

We conclude this section with the following Christensen's type result.
\begin{proposition}\label{p:Compact-res-sigma}
The following assertions are equivalent.
\begin{enumerate}
\item [(i)] $\CC(X)$ is analytic.
\item[(ii)] $\CC(X)$ is K-analytic and $X$ is $\sigma$-compact.
\item[(iii)] $\CC(X)$ has a compact resolution and $X$ is $\sigma$-compact.
\end{enumerate}
If additionally $X$ is first countable, the above conditions are equivalent to
\begin{enumerate}
\item [(iv)] $X$ is metrizable  and $\sigma$-compact.
\end{enumerate}
\end{proposition}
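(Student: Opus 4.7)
The plan is to prove the cycle $(i)\Rightarrow(ii)\Leftrightarrow(iii)\Rightarrow(i)$ in general, and then to add the extra equivalence with $(iv)$ under first-countability. The implications $(i)\Rightarrow(ii)$ and $(ii)\Leftrightarrow(iii)$ are routine given what is already in the excerpt. For $(i)\Rightarrow(ii)$, every analytic space is K-analytic (a continuous map $\NN^\NN\to\CC(X)$ is a special case of a USC compact-valued map); composing with the continuous identity $\CC(X)\to C_p(X)$ shows $C_p(X)$ is analytic, so $X$ is $\sigma$-compact by Calbrix's theorem (the result invoked in Fact~\ref{f:Free-G-base-cosmic}). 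The equivalence $(ii)\Leftrightarrow(iii)$ is the cited characterisation of K-analyticity of $\CC(X)$ via compact resolutions (Theorem~9.9 of \cite{kak}).

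The heart of the argument is $(iii)\Rightarrow(i)$. Writing $X=\bigcup_{n\in\NN}K_n$ with $K_n$ compact, each restriction map $r_n\colon\CC(X)\to C(K_n)$ (the target being the Banach space under the sup-norm) is continuous and, by Tietze's extension theorem, surjective; therefore $C(K_n)$ inherits K-analyticity. But a K-analytic metric space is Lindel\"{o}f and hence separable, so $C(K_n)$ is a separable Banach space, and by the classical theorem $K_n$ is metrizable. Hence $X$ is a countable union of second-countable compacta, so $X$ is a cosmic Tychonoff space. A standard descriptive-topology result---a completely regular K-analytic space with a countable network is analytic---then finishes, once cosmicity of $\CC(X)$ itself is verified; for the latter one invokes Michael's theory of $\aleph_0$-spaces applied via the restrictions $r_n$, yielding cosmicity of $\CC(X)$ in the compact-open topology.

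For the addendum: $(iv)\Rightarrow(iii)$ is immediate from Corollary~\ref{c:Free-resol-metric}. Conversely, $(iii)$ plus first-countability gives, by the analysis above, that $X$ is a first-countable $\sigma$-compact Tychonoff space with each $K_n$ metrizable, hence second-countable. Extending countable bases of the $K_n$'s to $X$ via Tietze's theorem and combining with the countable local bases guaranteed by first-countability produces a countable base for $X$; thus $X$ is regular second-countable, and Urysohn's metrization theorem gives $(iv)$.

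I expect the main obstacle to be the last metrization step in ``$(iii)$ plus first-countability implies $(iv)$''. ``First-countable plus cosmic'' is not enough to force second-countability in general---the Sorgenfrey line is a classical counterexample---so one must genuinely exploit the $\sigma$-compact decomposition into second-countable compacta when assembling the countable base. A secondary subtlety in $(iii)\Rightarrow(i)$ is that a countable union of $\aleph_0$-spaces need not be an $\aleph_0$-space, so cosmicity of $\CC(X)$ has to be derived via the restriction maps $r_n$ and the metrizability of their compact-image targets, rather than straight from cosmicity of $X$.
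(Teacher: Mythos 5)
Your treatment of (i)$\Rightarrow$(ii), (ii)$\Leftrightarrow$(iii) and (iv)$\Rightarrow$(iii) is fine and matches the paper, but the heart of your argument, (iii)$\Rightarrow$(i), has a genuine gap at the step ``cosmicity of $\CC(X)$''. The restriction maps $r_n\colon\CC(X)\to C(K_n)$ give only a continuous injection of $\CC(X)$ into the separable metrizable space $\prod_n C(K_n)$, not an embedding, because a compact subset of a $\sigma$-compact space need not lie in any $K_n$ (think of $\mathbb{Q}$, which is not hemicompact); a continuous injection into a cosmic space yields a weaker separable metrizable topology, not a countable network for $\tau_k$ itself. Michael's theorem producing a countable network on $\CC(X)$ requires $X$ to be an $\aleph_0$-space, and you have only established that $X$ is $\sigma$-compact and cosmic; that is not enough: a half-plane with Euclidean neighbourhoods off the axis and, at axis points, neighbourhoods consisting of an axis interval together with a steep cone, is $\sigma$-compact, cosmic and first countable but not metrizable, hence (being first countable) not an $\aleph_0$-space. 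So ``invoke Michael's theory via the $r_n$'' is not a proof. The good news is that the weaker metrizable topology you do obtain is already sufficient, and this is exactly the paper's route: it constructs the metrizable locally convex topology of uniform convergence on the $K_n$ directly (the sets $V_n$ in (\ref{equ:Compact-resol})), without ever needing metrizability of the $K_n$ or cosmicity of $\CC(X)$, and then applies Talagrand's theorem \cite[Proposition 6.3]{kak}: a space with a compact resolution admitting a weaker metrizable topology is analytic. Replacing your countable-network step by this submetrizability argument repairs (iii)$\Rightarrow$(i) and, incidentally, makes your (correct but unnecessary) detour through the separability of the Banach spaces $C(K_n)$ superfluous.

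The addendum (iii) plus first countability $\Rightarrow$ (iv) is also gapped, and here the flaw is the one you yourself feared: your argument uses only that $X$ is first countable, $\sigma$-compact and has metrizable compacta, and that is simply not enough to force second countability --- the bow-tie-type space above is a counterexample to precisely this purely topological implication, and your proposed assembly of a countable base (``extending countable bases of the $K_n$'s via Tietze and combining with the countable local bases'') provides no mechanism for selecting countably many of the uncountably many local basic sets. One must genuinely use hypothesis (iii), i.e.\ the analyticity of $\CC(X)$ already obtained; the paper does this by quoting \cite[Theorem 5.7.5]{mcoy}, which for first countable $X$ characterizes analyticity of $\CC(X)$ through metrizability and $\sigma$-compactness of $X$. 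Your direction (iv)$\Rightarrow$(iii) via Corollary \ref{c:Free-resol-metric} is correct.
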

\begin{proof}
First we prove the following claim using  some ideas from \cite{fe-ka}  strongly motivated by Ferrando's Theorem 1  of \cite{Ferrando-Cc}.

{\em Claim. If $X$  is a $\sigma$-compact space, then $C_p(X)$  admits a stronger metrizable locally convex topology.} Indeed, let $X=\bigcup_{n=1}^{\infty } K_{n}$, where $K_{n}$ is a compact subset of $X$ and $K_n\subseteq K_{n+1}$ for every $n\in\NN$. For every $n\in\NN$, define
\begin{equation} \label{equ:Compact-resol}
V_{n}:=\left\{ f\in C\left( X\right) :\sup_{x\in K_{n}}\left| f\left( x\right) \right| \leq \frac{1}{n}\right\}.
\end{equation}
Clearly, $V_{n+1}\subseteq V_{n}$ and $\bigcap_{n=1}^{\infty }V_{n}=\left\{ 0\right\} $, where  $0$ stands for the identically null function on $X$. Note that the sets $V_{n}$ are absorbing since if $g\in C\left( X\right) $, then there is $k\in \mathbb{N}$ such that $\sup_{x\in K_{n}}\left| g\left( x\right) \right| \leq k$, so that $g\in knV_{n}$. Moreover, if
\[
U=\left\{ f\in C\left( X\right) :\max_{1\leq i\leq n}\left| f\left( x_{i}\right) \right| <\epsilon \right\}
\]
and $p\in \mathbb{N}$ is chosen so that $x_{i}\in V_{p}$ for $1\leq i\leq n$ and $p^{-1}<\epsilon $, then $V_{p}\subseteq U$ and clearly $V_{2n}\subseteq
2^{-1}V_{n}$ for each $n\in \mathbb{N}$. This shows that $\left\{ V_{n}:n\in \mathbb{N}\right\} $ is a base of neighborhoods of the origin of a locally convex topology on $C\left( X\right) $ stronger than the pointwise topology. The claim is proved.

The implications (i) $\Rightarrow$ (ii) $\Rightarrow$ (iii) are clear; note  that if $\CC(X)$ is analytic, then Calbrix's  result, see \cite[Theorem 9.7]{kak}, implies that $X$ is $\sigma$-compact.

(iii) $\Rightarrow$ (i): If $X$ is $\sigma$-compact, the claim and (\ref{equ:Compact-resol}) show that the space $C(X)$ admits a metrizable locally convex topology weaker (or equal) to the compact-open topology $\tau_{k}$. As $\CC(X)$ has a compact resolution (by assumption), we apply Talagrand's result, see \cite[Proposition 6.3]{kak}, to deduce that $\CC(X)$ is analytic.

If $X$ first countable, (i) is equivalent to (iv) by \cite[Theorem 5.7.5]{mcoy}.
\end{proof}

%%%%%%%%%%%%%%%%%%%%%%%%%%%
%%%%%%%%%%%%%%%%%%%%%%%%%%%
%%%%%%%%%%%%%%%%%%%%%%%%%%%
%%%%%%%%%%%%%%%%%%%%%%%%%%%

\section{Proofs of Theorems \ref{t:Free-G-base-Main}, \ref{t:Free-G-base-resolution} and  \ref{t:Free-LCS-G-base}} \label{sec:1}

%%%%%%%%%%%%%%%%%%%%%%%%%%%
%%%%%%%%%%%%%%%%%%%%%%%%%%%
%%%%%%%%%%%%%%%%%%%%%%%%%%%
%%%%%%%%%%%%%%%%%%%%%%%%%%%

It is well-known  that the dual space of $\CC(X)$ is the space $M_c(X)$ of all regular Borel measures on $X$ with compact support.
Denote by $\tau_e$ the topology on $M_c(X)$ of uniform convergence on the  equicontinuous pointwise bounded subsets of $C(X)$. For $A\subseteq \CC(X)$ and $B\subseteq M_c(X)$, we set as usual
\[
%\begin{split}
A^\circ  = \left\{ \mu\in M_c(X): \; |\mu(f)|\leq 1\; \forall f\in A\right\}, \mbox{ and }
B^\circ  = \{ f\in\CC(X): \; |\mu(f)|\leq 1 \; \forall \mu\in B\}.
%\end{split}
\]

\begin{proposition} \label{p:G-L-Mc}
Let $X$ be an Ascoli space. Then $(M_c(X),\tau_e)$ has a $\GG$-base if and only if  $\CC(X)$ has a compact resolution swallowing  compact subsets of $\CC(X)$.
\end{proposition}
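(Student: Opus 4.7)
The plan is to exploit the polar duality between $\CC(X)$ and $(M_c(X),\tau_e)$: a compact resolution of $\CC(X)$ swallowing compact sets should correspond via $K\mapsto K^\circ$ to a $\GG$-base at $0$ of $(M_c(X),\tau_e)$, and since $(M_c(X),\tau_e)$ is a locally convex space, a $\GG$-base at $0$ is automatically translated to every point. The key preliminary observation is that when $X$ is Ascoli, the family $\{K^\circ : K\subseteq \CC(X)\text{ compact}\}$ is a base of neighborhoods of $0$ for $\tau_e$: if $A\subseteq C(X)$ is equicontinuous and pointwise bounded, then $A^\circ = \overline{A}^\circ$ and $\overline{A}$ is compact equicontinuous by Fact \ref{f:Ascoli-Free-Ck}, while the Ascoli property ensures conversely that every compact subset of $\CC(X)$ is equicontinuous and pointwise bounded.

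For the implication from compact resolutions to $\GG$-bases, given a compact resolution $\{K_\alpha : \alpha\in\NN^\NN\}$ of $\CC(X)$ swallowing compact sets, I would set $W_\alpha := K_\alpha^\circ$. Polar reversal yields $W_\beta\subseteq W_\alpha$ whenever $\alpha\leq\beta$; and given any $\tau_e$-neighborhood $W$ of $0$, choosing a compact $K\subseteq \CC(X)$ with $K^\circ\subseteq W$ and then $\alpha$ with $K\subseteq K_\alpha$ shows $W_\alpha = K_\alpha^\circ \subseteq K^\circ \subseteq W$.

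The converse direction carries the main obstacle. Given a $\GG$-base $\{W_\alpha\}$ at $0$ in $(M_c(X),\tau_e)$, I would define $K_\alpha := W_\alpha^\circ \subseteq C(X)$. Monotonicity $K_\alpha \subseteq K_\beta$ for $\alpha\leq\beta$ is immediate from polar reversal, and the swallowing property follows from the bipolar theorem: for any compact $K\subseteq \CC(X)$, $K^\circ$ is a $\tau_e$-neighborhood of $0$ by the preliminary observation, so $W_\alpha \subseteq K^\circ$ for some $\alpha$, whence $K\subseteq K^{\circ\circ} \subseteq W_\alpha^\circ = K_\alpha$ (and singleton swallowing forces $\bigcup_\alpha K_\alpha = C(X)$). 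The delicate step is verifying each $K_\alpha$ is $\tau_k$-compact. Since $W_\alpha$ is a $\tau_e$-neighborhood of $0$, it contains $L^\circ$ for some compact $L\subseteq \CC(X)$, so $K_\alpha \subseteq L^{\circ\circ}$. By the Ascoli hypothesis $L$ is equicontinuous and pointwise bounded; the absolutely convex hull of $L$ inherits both properties, and its $\tau_k$-closure is therefore compact in $\CC(X)$ by Fact \ref{f:Ascoli-Free-Ck}. Because $\tau_k$ and $\sigma(C(X),M_c(X))$ are compatible with the dual pair $\langle C(X),M_c(X)\rangle$, they share the same closed absolutely convex sets, so this $\tau_k$-closure coincides with the bipolar $L^{\circ\circ}$. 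Finally $K_\alpha=W_\alpha^\circ$ is $\sigma(C(X),M_c(X))$-closed and absolutely convex, hence $\tau_k$-closed and contained in the $\tau_k$-compact set $L^{\circ\circ}$, so $K_\alpha$ is itself $\tau_k$-compact, completing the argument.
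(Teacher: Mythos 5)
Your proposal is correct and follows essentially the same route as the paper: both directions are handled by polar duality ($K_\alpha\mapsto K_\alpha^\circ$ and $U_\alpha\mapsto U_\alpha^\circ$), using the Ascoli hypothesis together with Fact \ref{f:Ascoli-Free-Ck} to identify polars of compact sets as a base of $\tau_e$-neighborhoods of zero, and the bipolar theorem plus the coincidence of $\tau_k$- and $\sigma(C(X),M_c(X))$-closures of (absolutely) convex sets to get $\tau_k$-compactness of the polars $W_\alpha^\circ$. The only differences are organizational (you isolate the neighborhood-base claim up front, while the paper routes the compactness step through a closed absolutely convex neighborhood $\overline{V}\subseteq U_\alpha$), not mathematical.
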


\begin{proof}
Assume that $\CC(X)$ has a compact resolution swallowing  compact subsets of $\CC(X)$.
Let $\KK =\{ K_\alpha: \alpha\in\NN^\NN\}$ be a compact resolution swallowing  compact sets of $\CC(X)$. For every $\alpha\in\NN^\NN$, set $U_\alpha := K_\alpha^\circ$. We show that the family $\UU :=\{ U_\alpha: \alpha\in\NN^\NN\} $ is a $\GG$-base in $(M_c(X),\tau_e)$. Indeed,
 every $U_\alpha$ is a neighborhood of zero in $\tau_e$ because $K_\alpha$ is equicontinuous.
Now let $U$ be a neighborhood of zero in  $(M_c(X),\tau_e)$. Take an  equicontinuous pointwise bounded subset $A$ of $C(X)$ such that $A^\circ \subseteq U$. By Fact \ref{f:Ascoli-Free-Ck}, the closure $K$ of $A$ in $\CC(X)$ is compact. So there is $\alpha\in\NN^\NN$ such that $K\subseteq K_\alpha$. Clearly, $U_\alpha =K_\alpha^\circ \subseteq A^\circ \subseteq U$. Thus $\UU$ is a base of $\tau_e$.

Conversely, let $(M_c(X),\tau_{e})$ have a $\GG$-base $\{ U_\alpha : \alpha\in\NN^\NN\}$. For every $\alpha\in\NN^\NN$, set $C_\alpha :=U_\alpha^\circ$. We show that the family $\mathcal{C}:=\{ C_\alpha : \alpha\in\NN^\NN\}$ is a compact resolution in $\CC(X)$ swallowing the compact sets.

Clearly, if $\alpha\leq\beta$ then $C_\alpha\subseteq C_\beta$. Since $M_c(X)$ is the dual space of $\CC(X)$, every $C_\alpha$ is closed in $\CC(X)$. To show that $C_\alpha$ is compact in $\CC(X)$, take an absolutely convex neighborhood $V$ of zero in $M_c(X)$ such that $\overline{V}\subseteq U_\alpha$ and choose an  equicontinuous pointwise bounded subset $A$ of $C(X)$ such that $A^\circ \subseteq V$. Clearly, the absolutely convex hull $\mathrm{acx}(A)$ of $A$ is also an  equicontinuous pointwise bounded subset of $C(X)$. So, by Fact \ref{f:Ascoli-Free-Ck}, the closure  $K:=\overline{\mathrm{acx}(A)}^{\,\tau_k}$ of $\mathrm{acx}(A)$ in the compact-open topology $\tau_k$ is a compact equicontinuous subset of $\CC(X)$. Since the bounded convex subsets of $C(X)$ in $\tau_k$ and $\sigma\big( C(X),M_c(X)\big)$ are the same, the Bipolar theorem implies that $K=K^{\circ\circ}$. As $$C_\alpha \subseteq \overline{V}^\circ \subseteq A^{\circ\circ} = K^{\circ\circ} =K$$  we obtain that $C_\alpha$ is compact.

Let $C$ be a compact subset of $\CC(X)$. Since $X$ is Ascoli, $C$ is equicontinuous and clearly pointwise bounded. Take $\alpha\in\NN^\NN$ such that $U_\alpha \subseteq C^\circ$. Then $$C\subseteq C^{\circ\circ} \subseteq U_\alpha^\circ =C_\alpha.$$ Thus the family $\mathcal{C}$ swallows the compact sets of $\CC(X)$.
\end{proof}

For a  space $X$ we denote by $\mu X$ the Dieudonn\'{e} completion of $X$. Note that any paracompact space is  Dieudonn\'{e} complete, see \cite[8.5.13(d)]{Eng}. Now Theorem \ref{t:Free-G-base-Main} immediately follows from the following  more general result.
\begin{theorem} \label{t:Free-G-base}
Let $X$ be a Tychonoff space such that $\mu X$ is an Ascoli space. Then  $L(X)$ has a $\GG$-base if and only if $\CC(\mu X)$ has a compact resolution swallowing  compact subsets. In this case the space $\CC(\mu X)$ is Lindel\"{o}f.
\end{theorem}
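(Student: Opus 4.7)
The plan is to reduce Theorem \ref{t:Free-G-base} to Proposition \ref{p:G-L-Mc} applied to the Ascoli space $\mu X$. The pivotal ingredient is Uspenski\u{\i}'s theorem from \cite{Usp2}: it provides, for any Tychonoff space $X$, a canonical linear map $L(X) \to (M_c(\mu X), \tau_e)$ sending $x \in X$ to the Dirac measure $\delta_x$, and asserts that this map is a topological embedding with dense range. Both spaces then share the common dual $C(X) = C(\mu X)$ under the natural pairing $(\mu, f) \mapsto \mu(f)$.

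Granted this identification, first I would establish that $L(X)$ has a $\GG$-base if and only if $(M_c(\mu X), \tau_e)$ has one. The ``if'' direction is immediate by restriction: if $\{V_\alpha : \alpha \in \NN^\NN\}$ is a $\GG$-base at $0$ in $(M_c(\mu X), \tau_e)$, then the traces $\{V_\alpha \cap L(X) : \alpha \in \NN^\NN\}$ form a $\GG$-base at $0$ in $L(X)$. For the converse, given a $\GG$-base $\{W_\alpha\}$ of absolutely convex closed neighborhoods of $0$ in $L(X)$, I would form the bipolars $V_\alpha := (W_\alpha^\circ)^\circ$ computed inside $M_c(\mu X)$ with respect to the pairing with $C(\mu X)$. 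Each polar $W_\alpha^\circ \subseteq C(\mu X)$ is equicontinuous and pointwise bounded (being the polar of a $0$-neighborhood in $L(X)$), so each $V_\alpha$ is a $\tau_e$-neighborhood of $0$. The family $\{V_\alpha\}$ is $\NN^\NN$-decreasing; and given any $\tau_e$-neighborhood $U$ of $0$ containing a set of the form $A^\circ$ for some equicontinuous pointwise bounded $A \subseteq C(\mu X)$, one selects $\alpha$ with $W_\alpha \subseteq A^\circ \cap L(X)$, whence $W_\alpha^\circ \supseteq A$ and therefore $V_\alpha = (W_\alpha^\circ)^\circ \subseteq A^\circ \subseteq U$. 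Combining this equivalence with Proposition \ref{p:G-L-Mc} applied to $\mu X$ yields the main equivalence of the theorem.

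For the concluding Lindel\"{o}f statement, a compact resolution on $\CC(\mu X)$ exhibits it as $K$-analytic: the assignment $\alpha \mapsto K_\alpha$ is an upper semi-continuous compact-valued cover of $\CC(\mu X)$ parameterized by the Lindel\"{o}f space $\NN^\NN$. Since every $K$-analytic space is Lindel\"{o}f, the conclusion follows at once.

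The hard part will be the embedding $L(X) \hookrightarrow (M_c(\mu X), \tau_e)$ with dense range, since this is precisely where Uspenski\u{\i}'s deep theorem is invoked and where the Ascoli hypothesis on $\mu X$ is used in an essential way (to guarantee that the native topology of $L(X)$ coincides with the topology inherited from $\tau_e$, and not merely the weaker pointwise topology). Once this identification is secured, the remaining duality/bipolar argument is routine and the result chains directly to the already proved Proposition \ref{p:G-L-Mc}.
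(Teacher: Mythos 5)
Your reduction of the main equivalence is correct and has the same skeleton as the paper's proof: Uspenski\u{\i}'s theorem identifying the completion of $L(X)$ with $(M_c(\mu X),\tau_e)$, followed by Proposition \ref{p:G-L-Mc} applied to the Ascoli space $\mu X$, and the K-analyticity route to Lindel\"{o}fness. Where you genuinely differ is in transferring the $\GG$-base between $L(X)$ and its completion: the paper simply cites Proposition 2.7 of \cite{GKL} (a topological group has a $\GG$-base if and only if its Ra\u{\i}kov completion has one), whereas you prove the nontrivial direction by hand via bipolars $V_\alpha:=(W_\alpha^\circ)^\circ$. That argument does work: monotonicity of polars gives the $\NN^\NN$-decreasing property; $W_\alpha\subseteq A^\circ\cap L(X)$ yields $A\subseteq A^{\circ\circ}\subseteq W_\alpha^\circ$ and hence $V_\alpha\subseteq A^\circ\subseteq U$; and each $V_\alpha$ is a $\tau_e$-neighbourhood because $W_\alpha\supseteq A^\circ\cap L(X)$ for some equicontinuous pointwise bounded $A\subseteq C(\mu X)$, whence (using density of $L(X)$) $W_\alpha^\circ\subseteq A^{\circ\circ}$, which is again equicontinuous and pointwise bounded by Fact \ref{f:Ascoli-Free-Ck} and the bipolar theorem --- the same computation as inside Proposition \ref{p:G-L-Mc}. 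Note that your normalization that the $W_\alpha$ be absolutely convex and closed is unnecessary (and would itself need justification); the polar construction ignores it. So your route buys a self-contained locally convex argument at the cost of redoing duality work that the completion lemma of \cite{GKL} packages.

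There is, however, a genuine flaw in your last paragraph. You claim that a compact resolution $\{K_\alpha:\alpha\in\NN^\NN\}$ of $\CC(\mu X)$ is itself an upper semi-continuous compact-valued cover, hence exhibits $\CC(\mu X)$ as K-analytic. That step fails: the assignment $\alpha\mapsto K_\alpha$ of a compact resolution need not be upper semi-continuous (a basic neighbourhood of $\alpha$ fixes only finitely many coordinates, and $K_\beta$ for $\beta$ in that neighbourhood can be arbitrarily large), and in general a topological space with a compact resolution need not be K-analytic --- this distinction is precisely why Theorem 9.9 of \cite{kak} and Tkachuk's theorem for $C_p(X)$ are nontrivial results. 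The conclusion is saved exactly as in the paper: for function spaces, $\CC(Y)$ has a compact resolution if and only if it is K-analytic (\cite[Theorem 9.9]{kak}), and K-analytic spaces are Lindel\"{o}f (\cite[Proposition 3.13]{kak}); you must invoke such a result rather than read the resolution directly as an usco map.
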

\begin{proof}
The space $L(X)$ has a $\GG$-base if and only if its (Raikov) completion $\overline{L(X)}$ has a $\GG$-base, see Proposition 2.7 of \cite{GKL}. It is known that $\overline{L(X)}$ is $(M_c(\mu X),\tau_e)$, see Theorem 5 of \cite{Usp2}. Now Proposition \ref{p:G-L-Mc} applies.
To prove the last assertion we note that the space $\CC(\mu X)$ is $K$-analytic by  Theorem 9.9 of \cite{kak}. So $\CC(\mu X)$ is  Lindel\"{o}f by Proposition 3.13 of \cite{kak}.
\end{proof}
We do not know whether the condition on $X$ to be an Ascoli space is essential in Theorem \ref{t:Free-G-base-Main}.

Question 4.18 in \cite{GKL}  asks whether for a $k$-space the existence of a $\GG$-base on $L(X)$ implies that also $\CC(\CC(X))$ has a $\GG$-base. By Ferrando--K{\c{a}}kol theorem \cite{fe-ka} (see also \cite[Theorem 4.9]{GKL}), the space $\CC(X)$ has a compact resolution swallowing compact subsets if and only if $\CC(\CC(X))$ has a $\GG$-base. Combining this result with Theorem \ref{t:Free-G-base-Main} we obtain a partial answer to \cite[Question 4.18]{GKL}.
\begin{corollary}
Let $X$ be a Dieudonn\'{e} complete Ascoli space. Then $L(X)$ has a $\GG$-base if and only if the space $\CC(\CC(X))$ has a $\GG$-base.
\end{corollary}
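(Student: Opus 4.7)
The plan is to obtain the corollary as a direct two-step concatenation of results already recorded in the paper, with essentially no new work beyond verifying that the hypotheses line up.

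First, I would invoke Theorem \ref{t:Free-G-base-Main}. Since $X$ is assumed Dieudonn\'e complete and Ascoli, that theorem applies verbatim and gives the equivalence
\[
L(X) \text{ has a } \GG\text{-base} \iff \CC(X) \text{ has a compact resolution swallowing compact subsets.}
\]
No additional hypothesis is needed here beyond what is assumed in the corollary.

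Second, I would appeal to the Ferrando--K\k{a}kol theorem (cited in the paragraph immediately preceding the corollary, and recorded as \cite[Theorem 4.9]{GKL}), which states that for any Tychonoff space $X$,
\[
\CC(X) \text{ has a compact resolution swallowing compact subsets} \iff \CC(\CC(X)) \text{ has a } \GG\text{-base.}
\]
Concatenating this equivalence with the one from Theorem \ref{t:Free-G-base-Main} yields the desired biconditional between $L(X)$ having a $\GG$-base and $\CC(\CC(X))$ having a $\GG$-base.

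There is no real obstacle: the corollary is set up precisely to make both cited equivalences applicable, so the only thing to check is that the Ferrando--K\k{a}kol equivalence is available in sufficient generality (it is stated without any Ascoli/Dieudonn\'e hypothesis, so it applies to our $X$ directly). Accordingly, the proof will be a two-line argument of the form ``by Theorem \ref{t:Free-G-base-Main}, $L(X)$ has a $\GG$-base iff $\CC(X)$ has a compact resolution swallowing compact subsets; by the Ferrando--K\k{a}kol theorem \cite{fe-ka}, the latter is equivalent to $\CC(\CC(X))$ having a $\GG$-base.''
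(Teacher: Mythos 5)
Your proposal is correct and is exactly the paper's own argument: the authors state the corollary immediately after quoting the Ferrando--K\k{a}kol theorem and note that it follows by ``combining this result with Theorem \ref{t:Free-G-base-Main}''. Nothing further is needed.
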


Corollary \ref{c:Free_G-base-countable} follows from the next result.
\begin{corollary} \label{c:Free-base-countable}
If $X$ is a countable Ascoli space, then the following assertions are equivalent:
\begin{enumerate}
\item[(i)] $L(X)$ has a $\GG$-base;
\item[(ii)] $\CC(X)$ has a compact resolution swallowing the compact sets of $\CC(X)$;
\item[(iii)] $X$ has a $\GG$-base.
\end{enumerate}
\end{corollary}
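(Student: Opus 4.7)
The plan is to chain together three results already assembled in the paper, once the elementary observation that a countable Tychonoff space is both $\sigma$-compact (it is a countable union of singletons) and Lindel\"of, hence paracompact and Dieudonn\'e complete, so that $\mu X = X$.

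\textbf{Step 1 [(i) $\Leftrightarrow$ (ii)].} Since $X$ is a countable Ascoli space, the preceding observation shows that $X$ is Dieudonn\'e complete and $\mu X = X$, so Theorem \ref{t:Free-G-base-Main} (or equivalently Theorem \ref{t:Free-G-base}) applies directly and yields the equivalence of (i) and (ii).

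\textbf{Step 2 [(ii) $\Rightarrow$ (iii)].} Assuming (ii), Proposition \ref{p:Free-G-base-ascoli} furnishes an Ascoli uniformity $\UU$ on $X$ with a $\GG$-base $\{U_\alpha : \alpha\in\NN^\NN\}$. Then for each $x\in X$ the family $\{U_\alpha(x) : \alpha\in\NN^\NN\}$ is an $\NN^\NN$-decreasing base of neighborhoods of $x$, so $X$ has a $\GG$-base.

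\textbf{Step 3 [(iii) $\Rightarrow$ (ii)].} Assuming (iii), Proposition \ref{p:Free-LCS-G-base}(ii) gives that the countable space $X$ with a $\GG$-base at each point is of $\NN^\NN$-uniformly compact type. Combined with the facts that $X$ is Ascoli and $\sigma$-compact, Proposition \ref{p:Free-G-base-uniform}(ii) then produces a compact resolution on $\CC(X)$ that swallows compact sets, i.e. (ii).

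This is purely an assembly argument, so no serious obstacle is expected; the only thing to verify carefully is the passage from a $\GG$-base of a uniformity to a $\GG$-base at each point in Step 2, which is immediate from the definitions. One could alternatively close the cycle as (iii) $\Rightarrow$ (i) via Theorem \ref{t:Free-LCS-G-base} (using Proposition \ref{p:Free-LCS-G-base}(ii)) to avoid routing through (ii), but factoring through (ii) is cleaner because Theorem \ref{t:Free-G-base-Main} is already available.
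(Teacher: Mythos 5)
Your proposal is correct, and its skeleton coincides with the paper's: the paper also gets (i)$\Leftrightarrow$(ii) from Theorem \ref{t:Free-G-base-Main} (noting that a countable space is Lindel\"of, hence Dieudonn\'e complete) and (iii)$\Rightarrow$(ii) from Propositions \ref{p:Free-LCS-G-base} and \ref{p:Free-G-base-uniform}, exactly as you do in Steps 1 and 3. The only genuine difference is how the cycle is closed towards (iii): the paper proves (i)$\Rightarrow$(iii) in one line, using that $X$ embeds into $L(X)$ and that a $\GG$-base at a point is inherited by subspaces, whereas you prove (ii)$\Rightarrow$(iii) via Proposition \ref{p:Free-G-base-ascoli}, obtaining an admissible Ascoli uniformity $\UU$ with a $\GG$-base $\{U_\alpha\}$ and then taking the sections $U_\alpha(x)$ as a decreasing neighborhood base at each $x$ (which is legitimate precisely because $\UU$ is admissible, so the $U_\alpha(x)$ are neighborhoods of $x$ and form a base at $x$). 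Both routes are valid; the paper's is more economical, since it needs nothing beyond the embedding $X\subseteq L(X)$, while yours buys a slightly sharper intermediate fact -- that the compact resolution condition (ii) by itself already yields a uniformity with a $\GG$-base compatible with the topology -- at the cost of invoking the heavier Proposition \ref{p:Free-G-base-ascoli}. Your closing remark that one could instead go (iii)$\Rightarrow$(i) through Theorem \ref{t:Free-LCS-G-base} is also consistent with the paper's machinery.
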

\begin{proof}
(i)$\Leftrightarrow$(ii) follows from Theorem \ref{t:Free-G-base-Main} (recall that any countable space being Lindel\"{o}f is Dieudonn\'{e} complete). (i)$\Rightarrow$(iii) follows from the fact that $X$ is a subspace of $L(X)$. (iii)$\Rightarrow$(ii) follows from Propositions \ref{p:Free-LCS-G-base} and \ref{p:Free-G-base-uniform}.
\end{proof}
Note that Ferrando in \cite{Ferrando-note}  gives a direct proof of the implication (iii)$\Rightarrow$(ii) in Corollary \ref{c:Free-base-countable}.

%\begin{example} {\em
%Let $E$ be an infinite-dimensional separable Banach space  not containing $\ell_{1}$ with non-separable dual space (for example, the James tree space). Then the unit ball $B_{w}$ of $E$  in the weak topology is not metrizable. But $B_{w}$ is a cosmic (hence Lindel\"{o}f and so paracompact) space as a continuous image of a separable metric space.  Therefore  $B_{w}$ is also Dieudonn\'{e} complete.  }
%\end{example}

We provide another necessary  condition for a  space $X$ to have the space $L(X)$ with a $\GG$-base.
\begin{proposition}\label{p:L(X)-G-base-necessary}
If $L(X)$ has a $\GG$-base,  then every precompact set in $L(X)$ (hence also in $X$) is metrizable.
\end{proposition}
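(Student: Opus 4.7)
The plan is to obtain this as an immediate corollary of a general fact already recalled in the introduction: in any locally convex space with a $\GG$-base, every precompact subset is metrizable (this is a result proved in \cite{kak}). So I would first invoke that theorem verbatim, applied to the locally convex space $L(X)$, whose $\GG$-base we have by hypothesis. This directly yields the metrizability of every precompact subset of $L(X)$.

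For the parenthetical statement about $X$ itself, I would use the standard fact that the canonical map $i\colon X\to L(X)$ is a topological embedding (this is part of the defining property of the free locally convex space; $X$ sits inside $L(X)$ as a closed subspace via $i$). Consequently, if $A\subseteq X$ is precompact in $X$, then $i(A)$ is precompact in $L(X)$ and homeomorphic to $A$; by the previous step $i(A)$ is metrizable, and metrizability transfers back to $A$.

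There is essentially no technical obstacle here. The proposition is a one-line consequence of the general $\GG$-base/metrizability theorem combined with the embedding $X\hookrightarrow L(X)$. The only thing to be mindful of is being explicit about the embedding, so that the parenthetical claim for $X$ really does follow from the result for $L(X)$; everything else reduces to quoting \cite{kak}.
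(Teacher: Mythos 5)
Your proposal is correct and is essentially the paper's own argument: quote the theorem from \cite{kak} that precompact sets in a locally convex space with a $\GG$-base are metrizable, then use the canonical embedding of $X$ into $L(X)$ to transfer the conclusion to $X$. No gaps.
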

\begin{proof}
Note that in every locally convex space $E$ with a $\GG$-base every precompact set is metrizable, see \cite[Theorem 11.1]{kak}.  We conclude the proof by noticing that $X$ embeds into $L(X)$.
\end{proof}

%\begin{example} {\em
%Let $V(\aleph_0)$ be the countable Fr\'{e}chet--Urysohn fan. Then $V(\aleph_0)$ is a (countable) hemicompact Fr\'{e}chet--Urysohn space. So $\CC\big( V(\aleph_0)\big)$ is a Polish space by Corollary 5.2.2 of \cite{mcoy}, and hence $\CC\big( V(\aleph_0)\big)$ has a compact resolution swallowing compact sets. Being Lindel\"{o}f the space $V(\aleph_0)$ is Dieudonn\'{e} complete. Thus $L\big( V(\aleph_0)\big)$ has a $\GG$-base by Theorem \ref{t:Free-G-base-Main}.}
%\end{example}

Recall that a topological space $X$ is a {\em $k_{\omega}$-space} (an {\em $\mathcal{MK}_\omega$-space}) if $X$ is the inductive limit of a countable family of compact (compact and metrizable) subsets. We proved in  \cite{GK-GMS2} that $L(X)$ has a $\GG$-base for every $\mathcal{MK}_\omega$-space $X$. Combining this result with Proposition \ref{p:L(X)-G-base-necessary} we obtain
\begin{corollary}\label{co}
Let $X$ be a $k_{\omega}$-space. Then $L(X)$  has $\GG$-base if and only if $X$ is an $\mathcal{MK}_\omega$-space.
\end{corollary}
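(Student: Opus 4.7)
The plan is to observe that this is essentially a direct combination of two already-established facts, so the proof splits cleanly into the two implications with the nontrivial work already done elsewhere.

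For the sufficiency (the ``if'' direction), I would simply cite the result from \cite{GK-GMS2} stated in the sentence immediately preceding the corollary: every $\mathcal{MK}_\omega$-space $X$ has $L(X)$ with a $\GG$-base. No new argument is needed here.

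For the necessity (the ``only if'' direction), assume $X$ is a $k_\omega$-space with $L(X)$ carrying a $\GG$-base. By the definition of a $k_\omega$-space, there is a countable family $\{K_n : n\in\NN\}$ of compact subsets of $X$ whose inductive limit is $X$. First I would note that each $K_n$, being compact in $X$, is compact (hence precompact) in $L(X)$ via the canonical embedding $i : X \hookrightarrow L(X)$. Then I invoke Proposition \ref{p:L(X)-G-base-necessary}, which says that every precompact subset of $L(X)$ is metrizable. This forces every $K_n$ to be metrizable. Consequently $X$ is the inductive limit of the countable family $\{K_n : n\in\NN\}$ of compact \emph{metrizable} subsets, i.e.\ $X$ is an $\mathcal{MK}_\omega$-space, as required.

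There is no real obstacle: the two nontrivial inputs (the sufficient condition from \cite{GK-GMS2} and the metrizability of precompact sets in spaces with a $\GG$-base from \cite[Theorem 11.1]{kak} used in Proposition \ref{p:L(X)-G-base-necessary}) have already been established earlier in the paper and in the cited literature. The only thing to verify carefully is that the compact subsets $K_n$ given by the $k_\omega$ structure are precisely the same compact subsets to which Proposition \ref{p:L(X)-G-base-necessary} applies, which is immediate because $X$ sits inside $L(X)$ as a topological subspace and the topology on each $K_n$ is unchanged by this embedding.
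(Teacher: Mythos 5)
Your proof is correct and follows exactly the route the paper intends: the ``if'' direction is the cited result of \cite{GK-GMS2}, and the ``only if'' direction combines the $k_\omega$-decomposition of $X$ with Proposition \ref{p:L(X)-G-base-necessary} (metrizability of precompact, hence compact, subsets of $X\subseteq L(X)$). Nothing is missing.
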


\begin{remark} {\em
In  \cite[Question 4.19]{GKL} we ask whether the existence of a $\GG$-base in the free abelian group $A(X)$ over a space $X$ implies that  $L(X)$ has also a $\GG$-base. Let $X$ be a discrete space. Then it is clear that $A(X)$ being discrete has a $\GG$-base. In \cite{LPT} it is shown that if $X$ is of cardinality $\geq\mathfrak{c}$, then  $L(X)$ does not have a $\GG$-base. This answers Question 4.19 of \cite{GKL} in the negative.  Our Corollary \ref{c:Free_G-base-metric} implies a stronger result: for every uncountable discrete space $X$, the space $L(X)$ does not have a $\GG$-base.  }
\end{remark}

\bibliographystyle{amsplain}

\end{document}